\g@addto@macro{\UrlBreaks}{\UrlOrds}
\providecommand{\noopsort}[1]{} 
\newtheorem{Th}{Theorem}[section]
\newtheorem{Prop}[Th]{Proposition}
\newtheorem{Lemma}[Th]{Lemma}
\theoremstyle{definition}
\newtheorem{Remark}[Th]{Remark}
\newtheorem{Cor}[Th]{Corollary}
\newcommand{\beq}{\begin{equation}}
\newcommand{\eeq}{\end{equation}}
\def\scalar(#1,#2){(#1\mid#2)}
\newcommand{\raz}{\mathbbm{1}}
\newcommand{\jeden}{\mathbbm{1}}
\newcommand{\ov}{\overline}
\newcommand{\R}{{\mathbb{R}}}
\newcommand{\C}{{\mathbb{C}}}
\newcommand{\N}{{\mathbb{N}}}
\newcommand{\NN}{{\mathbb{N}}}
\newcommand{\vep}{\varepsilon}
\newcommand{\mob}{\boldsymbol{\mu}}
\newcommand{\tend}[3][]{\xrightarrow[#2\to#3]{#1}}
\newcommand{\egdef}{:=}
\newcommand{\cav}[2]{d_{#1}\left(#2\right)}
\newcommand{\lav}[2]{d_{#1}^{\text{log}}\left(#2\right)}
\title[M\"obius orthogonality in density]{M\"obius orthogonality in density  for zero entropy dynamical systems}
\author{Alexander Gomilko \and Mariusz Lema\'nczyk \and Thierry de la Rue}
\date{}
\begin{document}

\begin{abstract} It is proved that whenever a zero entropy dynamical system $(X,T)$ has only countably many ergodic measures and $\mob$ stands for the arithmetic M\"obius function then there exists $A=A(X,T)\subset\N$ of logarithmic density one such that for each $f\in C(X)$,
$$
\lim_{A\ni N\to\infty}\frac1N\sum_{n\leq N}f(T^nx)\mob(n)=0$$
uniformly in $x\in X$, in particular, the density version of M\"obius orthogonality holds.\end{abstract}

\maketitle
\normalsize
\thispagestyle{empty}

\section{Introduction} Following P.\ Sarnak \cite{Sa}, we say that a topological system $(X,T)$ is {\em M\"obius orthogonal} if
\beq\label{sarCes}
\lim_{N\to\infty}\frac1N\sum_{n\leq N}f(T^nx)\mob(n)=0
\eeq
for all $f\in C(X)$ and $x\in X$ (here $\mob$ stands for the classical arithmetic M\"obius function).
By the standard trick of summation by parts (which we recall below for the reader's convenience), we obtain that the M\"obius orthogonality of $(X,T)$ implies the {\em logarithmic M\"obius orthogonality} of $(X,T)$:
\beq\label{sarLog}
\lim_{N\to\infty}\frac1{\log N}\sum_{n\leq N}\frac1nf(T^nx)\mob(n)=0
\eeq
for all $f\in C(X)$ and $x\in X$.
The celebrated Sarnak's conjecture \cite{Sa} claims that all zero entropy systems are M\"obius orthogonal, but this statement  has been established only for some selected classes (we refer the reader to the bibliography in survey \cite{Fe-Ku-Le} to see for which classes). In some contrast to this, a considerable progress has been made recently in our understanding of the logarithmic Sarnak's conjecture: Frantzikinakis and Host \cite{Fr-Ho} proved that each zero entropy system whose set of ergodic measures is countable is logarithmic M\"obius orthogonal. Earlier, Tao \cite{Ta2} proved that the logarithmic Sarnak's conjecture is equivalent to the logarithmic version of the classical Chowla conjecture (from 1965) on auto-correlations of the M\"obius function.\footnote{Sarnak's conjecture itself was motivated by the fact that the Chowla conjecture implies Sarnak's conjecture \cite{Ab-Ku-Le-Ru}, \cite{Sa}, \cite{Ta00}.
See also  \cite{Ta1}, \cite{Ta-Te}, \cite{Ta-Te1}, where special cases of the validity of the logarithmic Chowla conjecture have been proved.}

While it looked rather odd to expect that we can say anything interesting about Ces\`aro type averaging knowing the convergence of logarithmic averages, it has been proved in \cite{Go-Kw-Le} that the logarithmic Chowla conjecture implies the validity of the Chowla conjecture along a subsequence. In fact, the result was a consequence of some general mechanism when a certain sequence (in a locally convex space) of logarithmic averages\footnote{The Chowla conjecture can be reformulated using the language of quasi-generic points for invariant measures in a certain shift space; it is then equivalent to the fact that that the empiric measures determined by $\mob$ converge to a certain natural measure which is ergodic, hence to an extremal point, see e.g. the survey \cite{Fe-Ku-Le}. Similarly, we deal with the logarithmic Chowla conjecture.} converges to an extremal point. This approach seems to fail if (what perhaps is natural), we would like to prove that the logarithmic M\"obius orthogonality of a fixed system implies its M\"obius orthogonality along a subsequence.
However, commenting on \cite{Go-Kw-Le},  Tao \cite{Ta} was able to prove a stronger result using a different method (second moment type argument). Namely, he proved that if the logarithmic Chowla conjecture holds then the Chowla conjecture holds along a subsequence of full logarithmic density; in particular of upper density~1.

The aim of this note is to show how to adapt Tao's argument (this is done in Theorem~\ref{banach} below) to be able to apply it to systems satisfying some (seemingly) stronger condition than the logarithmic M\"obius orthogonality and which allows one to deduce M\"obius orthogonality in full logarithmic density. In order to formulate such a result we first recall the {\em strong MOMO} notion introduced  in \cite{Ab-Ku-Le-Ru}. Namely, a system $(X,T)$ satisfies this property if
for all increasing sequences $(b_k)\subset\N$ with $b_{k+1}-b_k\to\infty$, all $(x_k)\subset X$ and $f\in C(X)$, we have
\beq\label{momo00}
\frac1{b_{K+1}}\sum_{k\leq K}\left|\sum_{b_k\leq n<b_{k+1}}f(T^{n-b_k}x_k)\mob(n)\right|\tend{K}{\infty} 0,\eeq
while if under the same assumptions we have
\beq\label{momo01}
\frac1{\log b_{K+1}}\sum_{k\leq K}\left|\sum_{b_k\leq n<b_{k+1}}\frac1nf(T^{n-b_k}x_k)\mob(n)\right|\tend{K}{\infty}  0,\eeq
then we say that $(X,T)$ satisfies the {\em logarithmic strong MOMO} property. It has been proved in \cite{Ab-Ku-Le-Ru} that Sarnak's conjecture is equivalent to the fact that all zero entropy systems enjoy the strong MOMO property.


Note that (\ref{momo00}) is equivalent to
\begin{equation}\label{momo00E}
\lim_{K\to\infty}\,\frac1{b_{K+1}}\sum_{k\leq K}\left\|\sum_{b_k\leq n<b_{k+1}}\mob(n)f\circ T^n\right\|_{C(X)}= 0,
\end{equation}
and (\ref{momo01}) is equivalent to
\begin{equation}\label{momo01E}
\lim_{K\to\infty}\,\frac1{\log b_{K+1}}\sum_{k\leq K}\left\|\sum_{b_k\leq n<b_{k+1}}\frac{\mob(n)}nf\circ T^n\right\|_{C(X)}= 0,
\end{equation}
for all increasing sequences $(b_k)\subset\N$ with $b_{k+1}-b_k\to\infty$ and $f\in C(X)$.
From (\ref{momo00E}) and the triangular inequality we obtain
\begin{equation}\label{unifconv}
\lim_{k\to\infty}\,\frac1{b_{k+1}}\left\|\sum_{n<b_{k+1}}\mob(n)f\circ T^n\right\|_{C(X)}= 0,
\end{equation}
whenever $b_{k+1}-b_k\to \infty$ as $k\to\infty$. Then it is easy to deduce that~\eqref{unifconv} holds for $b_k:=k$, that is, 
the uniform convergence in M\"obius orthogonality~\eqref{sarCes} holds.
Analogously, we obtain that the logarithmic strong MOMO property implies the uniform convergence
in the logarithmic M\"obius orthogonality~(\ref{sarLog}).

Here is our main result:

\begin{Th}\label{t:strongmomo} Assume that a topological system $(X,T)$ satisfies the logarithmic strong MOMO property. Then there exists  $A=A(X,T)\subset\N$ with full logarithmic density such that, for each $f\in C(X)$,
\beq\label{Sar0}
 \lim_{A\ni N\to\infty}\ \left\|\frac1N\sum_{n\leq N}\mob(n)f\circ T^n\right\|_{C(X)}=0.\eeq
In particular, M\"obius orthogonality holds along a subsequence (of $N$) of full logarithmic density.
\end{Th}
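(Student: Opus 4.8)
The plan is to mimic Tao's second-moment argument, but replacing the single sequence $\mob$ by the $C(X)$-valued sequence $n\mapsto \mob(n)f\circ T^n$ (more precisely, by its $x$-evaluations) and using the logarithmic strong MOMO hypothesis to control the relevant averages. The key idea is that the logarithmic strong MOMO property should give much more than convergence to $0$ of the single logarithmic average \eqref{sarLog}: by varying the sequence $(b_k)$ one should be able to extract a ``second-moment'' statement, namely that the logarithmically weighted average over dyadic-type blocks of the $squared$ Cesàro averages tends to $0$. Concretely, I would first prove a lemma (the analogue of Theorem~\ref{banach}, which I am allowed to invoke) asserting: if $(v_n)$ is a bounded sequence in a Banach space such that for all $(b_k)$ with $b_{k+1}-b_k\to\infty$ one has $\frac1{\log b_{K+1}}\sum_{k\le K}\bigl\|\sum_{b_k\le n<b_{k+1}}\frac{v_n}n\bigr\|\to0$, then
\[
\frac1{\log N}\sum_{m\le N}\frac1m\left\|\frac1m\sum_{n\le m}v_n\right\|\tend{N}{\infty}0 .
\]
Applying this with $v_n=\mob(n)f\circ T^n\in C(X)$ yields that $\psi_f(m):=\bigl\|\frac1m\sum_{n\le m}\mob(n)f\circ T^n\bigr\|_{C(X)}$ is a bounded sequence whose logarithmic average tends to $0$.

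Having this for each fixed $f$, the next step is the standard passage from ``logarithmic average of $\psi_f$ is $0$'' to ``$\psi_f\to0$ along a set of full logarithmic density''. Since $\psi_f\ge 0$ and bounded, $\frac1{\log N}\sum_{m\le N}\frac{\psi_f(m)}m\to 0$ is equivalent, by a Markov/Chebyshev argument on the logarithmic measure, to: for every $\eps>0$ the set $\{m:\psi_f(m)>\eps\}$ has zero logarithmic density, hence $\psi_f(m)\to0$ as $m\to\infty$ outside a set $A_f$ of zero logarithmic density. This produces, for each $f$, a full-logarithmic-density set $A_f$ along which $\bigl\|\frac1N\sum_{n\le N}\mob(n)f\circ T^n\bigr\|_{C(X)}\to0$.

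The final step is to make the set $A$ independent of $f$. Fix a countable dense subset $\{f_j\}_{j\ge1}\subset C(X)$ (possible since $X$ is compact metric — I will assume $(X,T)$ is a topological system on a compact metric space, as is standard in this circle of ideas and implicit in the statement). A diagonal argument over $\{A_{f_j}\}$ produces a single set $A$ of full logarithmic density along which $\psi_{f_j}(N)\to0$ for every $j$; concretely, choose an increasing sequence $N_j$ so that $\frac1{\log N}\sum_{m\le N,\,m\notin\bigcap_{i\le j}A_{f_i}}\frac1m<2^{-j}$ for $N\ge N_j$, and let $A=\bigcup_j\bigl([N_j,N_{j+1})\cap\bigcap_{i\le j}A_{f_i}\bigr)$. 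Then for general $f\in C(X)$ and $\eps>0$, pick $f_j$ with $\|f-f_j\|_{C(X)}<\eps$; since $\bigl\|\frac1N\sum_{n\le N}\mob(n)(f-f_j)\circ T^n\bigr\|_{C(X)}\le \|f-f_j\|_{C(X)}<\eps$ uniformly in $N$, we get $\limsup_{A\ni N\to\infty}\psi_f(N)\le\eps$, and letting $\eps\to0$ gives \eqref{Sar0}. The last assertion of the theorem is immediate since uniform convergence on $X$ implies pointwise convergence, which is exactly M\"obius orthogonality \eqref{sarCes} along $A$.

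\textbf{Main obstacle.} The crux is the first lemma — upgrading the hypothesis (a statement about block sums along \emph{arbitrary} sparse partitions $(b_k)$) to the second-moment statement about the logarithmic average of the \emph{squared/normed} Cesàro averages $\bigl\|\frac1m\sum_{n\le m}v_n\bigr\|$. This is precisely the adaptation of Tao's argument advertised in the introduction as Theorem~\ref{banach}: the delicate point is choosing the partition $(b_k)$ (and auxiliary scales) cleverly so that the block-sum hypothesis, summed appropriately, dominates a telescoping/Abel-summation expansion of $\sum_m \frac1m\bigl\|\frac1m\sum_{n\le m}v_n\bigr\|$ — the interplay between the $\frac1n$ logarithmic weights inside the blocks and the $\frac1m$ weight in the target average, together with the $b_{k+1}-b_k\to\infty$ constraint, is where all the work lies. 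Once that analytic lemma is in hand, the remaining steps (Chebyshev on logarithmic density, countable diagonalization, density of $C(X)$) are routine.
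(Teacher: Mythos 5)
Your overall architecture matches the second half of the paper's argument: reduce to an abstract statement about the operators $S_nf=\mob(n)\,f\circ T^n$ on $C(X)$, pass from a logarithmic-average statement to a full-logarithmic-density set for each fixed $f$ by a Markov-type argument, and then diagonalize over a countable dense subset of $C(X)$ using the uniform bound $\|S_ng\|_{C(X)}\le\|g\|_{C(X)}$; these steps are correct and correspond to Lemmas~\ref{Cor1} and~\ref{Sets} and the closing approximation in the proof of Theorem~\ref{banach}. The genuine gap is your ``first lemma'', which carries all of the content and which you do not prove. In particular, you cannot obtain it by simply invoking Theorem~\ref{banach}: the hypothesis of Theorem~\ref{banach} is the sliding short-interval condition $R_f(H)\to0$, i.e.
\[
\lim_{H\to\infty}\ \limsup_{N\to\infty}\ \frac1{\log N}\sum_{n\le N}\frac1n\Bigl\|\frac1H\sum_{h\le H}S_{n+h}f\Bigr\|_2=0,
\]
which requires control of windows of a \emph{fixed} length $H$ at \emph{every} starting point $n$, whereas the logarithmic strong MOMO property only controls the block sums $\sum_{b_k\le n<b_{k+1}}\frac1nS_nf$ along partitions whose block lengths tend to infinity. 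Bridging these two statements is exactly the new work in the paper (Proposition~\ref{l:prawie}): one argues by contraposition, pigeonholes over the residue classes $n\equiv r_\ell\ [H_\ell]$ to convert a failure of $R_f(H_\ell)\to0$ into badly behaving partitions of \emph{constant} block length $H_\ell$, and then, since constant-length partitions are not admissible in the MOMO hypothesis, merges these over $\ell$ via a diagonalization lemma (Lemma~\ref{Ld}) into one partition with gaps tending to infinity, contradicting the hypothesis.

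Your alternative sketch for the lemma --- choosing one partition $(b_k)$ cleverly so that the block-sum hypothesis dominates an Abel-summation expansion of $\sum_m\frac1m\bigl\|\frac1m\sum_{n\le m}v_n\bigr\|$ --- is not substantiated and does not obviously work: the hypothesis only gives smallness of a logarithmically normalized average over $k$ of norms of block sums of $\frac{v_n}{n}$ along a single partition, and the triangle inequality relates such block norms to the Cesàro averages from the origin only in the unhelpful direction. The point of Tao's second-moment mechanism, which your plan omits, is to compare $\frac1m\sum_{n\le m}v_n$ with the average over all $n\le m$ of the sliding windows $\frac1H\sum_{h\le H}v_{n+h}$ (up to an $O(H/m)$ error), and this is why one needs the short-interval condition at every position $n$ rather than along one partition. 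As written, your proposal therefore restates the theorem modulo routine reductions and defers the essential step to an unproven claim; to complete it you need (i) the pigeonhole-plus-diagonalization argument of Proposition~\ref{l:prawie} and Lemma~\ref{Ld} to deduce $R_f(H)\to0$ from the logarithmic strong MOMO property, and then (ii) Theorem~\ref{banach} with $\phi(s)=s$ to conclude --- which is precisely the paper's route through Theorem~\ref{t:strongmomoA}.
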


One of the main results in \cite{Fr-Ho} states that, if a system $(X,T)$ has zero entropy and if its set of ergodic measures is countable, then
the system is logarithmic M\"obius orthogonal. We will show that such systems satisfy the strong logarithmic MOMO property, hence obtaining the following:\footnote{We were informed by N. Frantzikinakis during the workshop ``Sarnak's Conjecture'' at the American Institute of Mathematics in mid-December 2018 that, independently of us, he can prove Corollary~\ref{c:main0} by modifying some arguments in \cite{Fr-Ho}.}

\begin{Cor}\label{c:main0}
Let $(X,T)$ be a zero entropy ergodic dynamical system such that the set $M^e(X,T)$ of ergodic $T$-invariant measures is countable. Then, there exists $A=A(X,T)\subset\N$ with full logarithmic density along which M\"obius orthogonality holds uniformly in $x\in X$.\end{Cor}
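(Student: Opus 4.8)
To prove Corollary~\ref{c:main0} the plan is to invoke Theorem~\ref{t:strongmomo}: it suffices to show that every zero entropy system $(X,T)$ whose set $M^e(X,T)$ of ergodic measures is countable enjoys the logarithmic strong MOMO property, and then Theorem~\ref{t:strongmomo} produces a set $A$ of full logarithmic density along which M\"obius orthogonality holds, the uniformity in $x$ being precisely the $C(X)$-norm in~\eqref{Sar0}. The single external input is the Frantzikinakis--Host theorem quoted above: zero entropy together with countably many ergodic measures implies logarithmic M\"obius orthogonality.

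First I would carry out the standard reductions that turn the MOMO expression~\eqref{momo01} into one weighted M\"obius sum. Splitting $f$ into real and imaginary parts (and using the triangle inequality inside the absolute values) we may assume $f$ real. Given an increasing $(b_k)$ with $b_{k+1}-b_k\to\infty$ and $(x_k)\subset X$, set $B_k:=\sum_{b_k\le n<b_{k+1}}\tfrac{\mob(n)}n f(T^{n-b_k}x_k)$ and choose $c_k\in\{-1,1\}$ with $c_kB_k=|B_k|$; then $\sum_{k\le K}|B_k|=\sum_{n<b_{K+1}}\tfrac{\mob(n)}n w(n)$, where $w(n):=c_kf(T^{n-b_k}x_k)$ for $b_k\le n<b_{k+1}$ and $w(n):=0$ otherwise. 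Put $\widetilde X:=\{-1,1\}\times X$, $\widetilde T:=\mathrm{id}\times T$, $\widetilde f(c,x):=cf(x)\in C(\widetilde X)$ and $\widetilde x_k:=(c_k,x_k)$, so that $w(n)=\widetilde f(\widetilde T^{\,n-b_k}\widetilde x_k)$ on the $k$-th block; crucially $(\widetilde X,\widetilde T)$ again has zero entropy and only countably many ergodic measures, namely $\{\delta_{\pm1}\}\times M^e(X,T)$. Thus it remains to prove $\tfrac1{\log N}\sum_{n<N}\tfrac{\mob(n)}n w(n)\to0$.

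The heart of the matter is that, although the shift orbit closure $Y:=\overline{\{S^nw:n\ge0\}}\subset[-\|f\|,\|f\|]^{\N}$ of $w$ need not have zero topological entropy (concatenating short orbit segments at far-apart positions can raise it), every invariant measure on $Y$ produced from $w$ by logarithmic averaging is carried by a good subsystem. Since $b_{k+1}-b_k\to\infty$, for each fixed $m$ the set of $n$ lying within distance $m$ of some endpoint $b_k$ has zero logarithmic density: the blocks of length $<\theta$ cover only a bounded set of integers, while the endpoints of the blocks of length $\ge\theta$ contribute $O(\theta^{-1}\log N)$ to $\sum_{n<N}1/n$, so letting $\theta\to\infty$ kills the density. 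Hence any weak-$\ast$ limit $\nu$ of $\tfrac1{\log N_i}\sum_{n\le N_i}\tfrac1n\delta_{S^nw}$ gives full mass to the compact set $Z:=\{\,c\,(f(T^ix))_{i\ge0}:c\in\{-1,1\},\,x\in X\,\}$, because a $\nu$-typical window of any fixed length must be a genuine $\pm$-orbit segment. Now $(Z,S)$ is a topological factor of $(\widetilde X,\widetilde T)$ via $(c,x)\mapsto c(f(T^ix))_{i}$, hence it has zero entropy and countably many ergodic measures, so~\cite{Fr-Ho} applies to it. To conclude, pass to a subsequence $(N_i)$ realizing $\limsup_N\tfrac1{\log N}|\sum_{n<N}\tfrac{\mob(n)}n w(n)|$, extract a joint logarithmic limit measure $\rho$ of $(w,\mob)$ on $Y\times\mathbb{X}_{\mob}$ (with $\mathbb{X}_{\mob}$ the shift orbit closure of $\mob$); its $Y$-marginal is carried by $Z$, and the logarithmic M\"obius orthogonality of $(Z,S)$ forces $\int(\mathrm{coord}_0\otimes\mathrm{coord}_0)\,d\rho=0$, i.e.\ the $\limsup$ vanishes. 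Combined with the previous paragraph this gives the logarithmic strong MOMO property, and Theorem~\ref{t:strongmomo} finishes the proof.

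The main obstacle is exactly this last transfer step: \cite{Fr-Ho} is phrased as logarithmic orthogonality along bona fide orbits $(f(T^nx))_n$, whereas $w$ is a concatenation of orbit segments and is not a point of $Z$. One must therefore either isolate from the Frantzikinakis--Host argument the stronger statement that $\mob$ is logarithmically disjoint from every zero entropy system with countably many ergodic measures — so that orthogonality becomes a property of the limiting joinings $\rho$, whose $Y$-marginals we have pinned to $Z$ — or give a direct approximation argument, once more exploiting that the block boundaries are logarithmically negligible, replacing $w$ by points of $Z$ up to an error tending to $0$. The remaining ingredients (the sign reduction, the passage to $\widetilde X$, the identification of $\mathrm{supp}\,\nu$, and the elementary logarithmic density estimate) are routine.
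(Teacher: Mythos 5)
Your overall strategy is the same as the paper's: reduce via Theorem~\ref{t:strongmomo} to the logarithmic strong MOMO property, encode the signs and the block structure into one point of a product/sequence system, show that every measure for which that point is logarithmically quasi-generic is carried by a ``genuine orbit'' subsystem with zero entropy and countably many ergodic components, and then invoke Frantzikinakis--Host. The cosmetic differences (signs $\pm1$ after reducing to real $f$ instead of the cube roots of unity from \cite[Lemma 18]{Ab-Ku-Le-Ru}; the space of $f$-value sequences $Z$ instead of $(X\times\Sigma_3)^{\N}$) are harmless, and your support argument for the quasi-generic measures (block boundaries have zero logarithmic density, hence the limit measures live on $Z$, which is a factor of $\widetilde X$) is correct.

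However, the decisive step is left open, as you yourself acknowledge: knowing that the $Y$-marginal of a limiting logarithmic joining $\rho$ of $(w,\mob)$ is carried by $Z$ does not by itself force $\int(\mathrm{coord}_0\otimes\mathrm{coord}_0)\,d\rho=0$; orthogonality is a statement about which joinings with the logarithmic Furstenberg systems of $\mob$ can occur, not about the marginal, so you would need a genuine disjointness-type strengthening of \cite{Fr-Ho}, which you do not prove. Your fallback, ``replace $w$ by points of $Z$ up to a small error,'' does not work: $w$ is not close to any single point of $Z$ in any sense that controls the correlation with $\mob$, and approximating block by block just reproduces the quantity \eqref{momo2} you are trying to bound. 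The gap is closed not by a new argument but by quoting the right form of the Frantzikinakis--Host theorem: the version stated in the paper as Theorem~\ref{thm:fh} (the remark after Theorem~1.3 in \cite{Fr-Ho}) is already a statement about an arbitrary point $y$ of an arbitrary topological system, assuming only that every measure for which $y$ is \emph{logarithmically quasi-generic} has zero entropy and countably many ergodic components -- no hypothesis on the orbit closure of $y$. Applied directly to your concatenated point $w$ (whose quasi-generic measures you have already pinned to $Z$), it yields \eqref{eq:fhy} for the coordinate function and hence \eqref{momo2}; with that citation your argument is complete and coincides with the paper's proof.
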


In particular, the above holds for all zero entropy uniquely ergodic systems. Corollary~\ref{c:main0} is slightly surprising even for horocycle flows (in the cocompact case), where we know that M\"obius orthogonality holds \cite{Bo-Sa-Zi} but it is open (see questions in \cite{Fe-Ku-Le}, \cite{Ka-Le-Ul}) whether M\"obius orthogonality holds in its uniform form. By Corollary~\ref{c:main0},  we have that a uniform version holds along a subsequence of logarithmic density~1 (let alone the upper density of this subsequence is~1). A use of~\cite{Fr-Ho1} shows that Corollary~\ref{c:main0} remains valid when $\mob$ is replaced by any multiplicative function which is  strongly aperiodic.

The rest of the note is devoted to give some illustrations how Theorem~\ref{banach} (which is an adaptation of Tao's result) can be applied in other situations. For example, we will show how in the main result in \cite{Go-Kw-Le} we can achieve full logarithmic density. Besides, we note that in the classical Davenport-Erd\"os theorem on the existence of the logarithmic density \cite{Da-Er} of sets of multiples, the upper asymptotic density is achieved along a set of full logarithmic density. Finally, we note in passing the logical implication:  Chowla conjecture of order~2 $\Rightarrow$ PNT along a subsequence of logarithmic density~1.

A few words on basic concepts and notation:
Given a subset $C\subset \N$, we denote by $\delta(C)$ its logarithmic density: $\delta(C):=\lim_{N\to\infty}(1/\log N)\sum_{n\leq N,\, n\in C}\frac1n$,
assuming that the limit exists. 
It is classical that
$
\underline{d}(C)\leq \delta(C)\leq \ov{d}(C)$,
where $\ov{d}(C):=\limsup_{N\to\infty}\frac1N\left|[1,N]\cap C\right|$ stands for the  upper asymptotic density, and similarly the lower (asymptotic) density $\underline{d}(C)$ is defined as the $\liminf$. In fact,
these inequalities are direct consequences of the classical relationship between Ces\`aro averages and harmonic averages: given a sequence $(a_n)$ and setting $s_n:=\sum_{j\leq n}a_j$, $s_0:=0$, we have:
\beq
 \label{sparts}
\begin{split}
\sum_{1\le n\leq N}\frac{a_n}n&=\sum_{1\le n\leq N}\frac1n(s_{n}-s_{n-1})=\\
&\sum_{1\le n\leq N-1}s_n
\left(\frac1n-\frac1{n+1}\right)+\frac{s_N}N=
\sum_{n\leq N-1}\frac{s_n}n\frac1{n+1}+\frac{s_N}N
\end{split}
\eeq
which basically says that the logarithmic averages of $(a_n)$ are the logarithmic averages of Ces\`aro averages (sometimes, we only use the fact that the harmonic averages are convex combinations of Ces\`aro averages).

In what follows when we speak about subsequences of natural numbers, we always mean increasing sequences of natural numbers (so that subsequences are the same as infinite subsets).
In Corollary~\ref{c:main0}, we find a subsequence of full logarithmic density which depends however on the system $(X,T)$. The methods used in this note do not seem to get one universal subsequence along which Sarnak's conjecture (i.e.\ M\"obius orthogonality for zero entropy systems) holds.  We could get such a universal sequence (see Proposition~\ref{p:localS} below) if we were able to prove
Sarnak's conjecture along a full logarithmic density sequence for {\bf each} zero entropy system, that is, by \cite{Ta2}, if the logarithmic Chowla conjecture holds. More precisely:

\begin{Prop}\label{p:localS}
Assume that for each zero entropy dynamical system $(X,T)$ there exists a subsequence $(N_k(X,T))_k$ of natural numbers with $\delta(\{N_k(X,T):\:k\geq1\})=1$ such that
\beq\label{er}
\lim_{k\to\infty}\frac1{N_k(X,T)}\sum_{n\leq N_k(X,T)}f(T^nx)\mob(n)=0\eeq
for all $f\in C(X)$ and $x\in X$. Then there exists a subsequence $(N_k)$ of natural numbers, $\delta(\{N_k:\:k\geq1\})=1$ such that
for each zero entropy dynamical system $(X,T)$,~\eqref{er} holds along $(N_k)$.
\end{Prop}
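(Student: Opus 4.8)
The plan is to build one zero entropy system of which the whole class of zero entropy systems is, in the sense relevant to~\eqref{er}, a family of factors, then to apply the hypothesis to that single system and transport the conclusion along the factor maps.

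First I would reduce to correlations of individual sequences. Let $S$ be the shift on $[-1,1]^{\Z}$, let $\pi_0\colon[-1,1]^{\Z}\to[-1,1]$ be the zeroth coordinate map, and for $a\in[-1,1]^{\Z}$ put $X_a:=\overline{\{S^na:n\in\Z\}}$; set
\[
Z:=\bigl\{a\in[-1,1]^{\Z}\ :\ h_{\mathrm{top}}(X_a,S)=0\bigr\}.
\]
Given a zero entropy system $(X,T)$, $x\in X$ and $f\in C(X)$ (we may assume $\|f\|_\infty\le1$, the case $f\equiv0$ being trivial), the map $\phi\colon X\to[-1,1]^{\Z}$, $\phi(y):=(f(T^ny))_{n\in\Z}$, is continuous and satisfies $\phi\circ T=S\circ\phi$. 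Hence, for $a:=\phi(x)$, the system $(X_a,S)$ is a topological factor of the zero entropy subsystem $(\overline{\{T^nx:n\in\Z\}},T)$ of $(X,T)$, so $a\in Z$, while $\frac1N\sum_{n\le N}f(T^nx)\mob(n)=\frac1N\sum_{n\le N}a_n\mob(n)$ for every $N$. It therefore suffices to produce one $(N_k)$ with $\delta(\{N_k\})=1$ along which $\frac1{N_k}\sum_{n\le N_k}a_n\mob(n)\to0$ for \emph{every} $a\in Z$.

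Next I would form the product system $(\mathcal X,\mathcal T):=\prod_{a\in Z}(X_a,S)$, $\mathcal T:=\prod_{a\in Z}S$, the index set being the set $Z$. Its crucial feature is that $h_{\mathrm{top}}(\mathcal X,\mathcal T)=0$: topological entropy is the supremum of $h_{\mathrm{top}}(\mathcal T,\mathcal U)$ over finite open covers $\mathcal U$ of $\mathcal X$, every such $\mathcal U$ is refined, by compactness, by an open cover depending on only finitely many coordinates, i.e.\ pulled back from a finite subproduct $\prod_{a\in F}(X_a,S)$, and a cover of that form computes the entropy of the subproduct, which is $0$ because finite products of zero entropy systems have zero entropy. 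Applying the hypothesis to $(\mathcal X,\mathcal T)$ gives $(N_k)$ with $\delta(\{N_k\})=1$ such that~\eqref{er} holds for $(\mathcal X,\mathcal T)$, for every point and every $g\in C(\mathcal X)$. Now fix $a\in Z$, let $p_a\colon\mathcal X\to X_a$ be the $a$-th coordinate projection, $g_a:=\pi_0\circ p_a\in C(\mathcal X)$, and let $\bfa\in\mathcal X$ be any point whose $a$-th coordinate equals $a$. Then $g_a(\mathcal T^n\bfa)=\pi_0(S^na)=a_n$, so~\eqref{er} for $(\mathcal X,\mathcal T)$ at $\bfa$ with the function $g_a$ reads $\frac1{N_k}\sum_{n\le N_k}a_n\mob(n)\to0$. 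Together with the reduction of the previous paragraph, this is exactly the assertion of the proposition.

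The only substantive point, everything else being formal, is the vanishing of $h_{\mathrm{top}}(\mathcal X,\mathcal T)$, together with the fact that one must here work with an uncountable (hence non-metrizable) product: one relies on the topological entropy of such a product being the supremum of the entropies of its finite subproducts, which holds because entropy is read off from finite open covers and each finite open cover of the product refines to one supported on finitely many coordinates. Leaving the metrizable category is unavoidable: no metrizable zero entropy system has all zero entropy systems as factors, already because the irrational rotations of the circle cannot all be factors of one metrizable system, their joint presence forcing its maximal equicontinuous factor to be a non-metrizable compact group. Finally, $h_{\mathrm{top}}$, $C(X)$ and M\"obius orthogonality are defined for arbitrary compact spaces, so the hypothesis genuinely applies to $(\mathcal X,\mathcal T)$.
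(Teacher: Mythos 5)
There is a genuine gap, and it is exactly at the point you flag as ``the only substantive point'': the application of the hypothesis to the uncountable product $(\mathcal X,\mathcal T)=\prod_{a\in Z}(X_a,S)$. The hypothesis of the proposition quantifies over zero entropy dynamical systems in the sense used throughout this circle of problems (and throughout the paper): compact \emph{metric} spaces. Every ingredient the statement interacts with --- Sarnak's conjecture itself, Tao's equivalence with logarithmic Chowla, the Frantzikinakis--Host theorem, and Theorem~\ref{banach}, which explicitly requires $\mathscr{B}_1=C(X)$ to be separable --- lives in the metrizable category. Your $\mathcal X$ is a product over an uncountable index set, hence non-metrizable and with non-separable $C(\mathcal X)$, so invoking the premise ``for each zero entropy dynamical system there exists a full logarithmic density subsequence\dots'' for $(\mathcal X,\mathcal T)$ is not an application of the stated hypothesis but a strictly stronger assumption. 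You yourself observe (correctly, via the maximal equicontinuous factor and the irrational rotations) that no metrizable system can serve as the universal object, so this defect cannot be repaired within your scheme: the universality-plus-factors strategy is structurally forced out of the class of systems to which the hypothesis applies. The preliminary reduction (passing from $(X,T,x,f)$ to the point $a=(f(T^nx))_n$ and the subshift $X_a\subset[-1,1]^{\Z}$, and the computation of the entropy of the product via finite open covers) is fine, but it only relocates the problem; it does not remove it.

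For contrast, the paper's argument never needs a universal system: from the hypothesis and Lemma~\ref{log} one gets logarithmic M\"obius orthogonality for every zero entropy (metric) system, hence by Tao's theorem the logarithmic Chowla conjecture, i.e.\ $\frac1{\log N}\sum_{n\le N}\frac1n\delta_{S^n\mob}\to\widehat\nu_{\mathscr S}$ in the M\"obius subshift; since $\widehat\nu_{\mathscr S}$ is \emph{ergodic}, Theorem~\ref{Erg1} (the second-moment argument) produces one subsequence $(N_k)$ of full logarithmic density along which the Ces\`aro empirical measures converge to $\widehat\nu_{\mathscr S}$, and the implication ``Chowla $\Rightarrow$ Sarnak'' transfers this single subsequence to all zero entropy systems. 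The universality you seek is thus supplied by the single arithmetic object $\mob$ and the ergodicity of $\widehat\nu_{\mathscr S}$, not by a single dynamical system dominating the whole class; if you want to salvage your approach you would have to either prove the density hypothesis for non-metrizable systems (not available) or re-route through the M\"obius subshift as the paper does.
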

To see the proof of Proposition~\ref{p:localS}, we have:

a) By assumption and the classical Lemma~\ref{log}, we obtain that for each zero entropy dynamical system $(X,T)$, we have
$\lim_{N\to\infty}(1/\log N)\sum_{n\leq N}\frac1nf(T^nx)\mob(n)=0$
for each $f\in C(X)$ and $x\in X$.

(b) By (a) and Tao's result (``logarithmic Sarnak implies logarithmic Chowla'') \cite{Ta2},  in the space $M(X_{\mob})$ of measures on $X_{\mob}$, we obtain that
$
(1/\log N)\sum_{n\leq N}(1/n)\delta_{S^n\mob}\to \widehat{\nu}_{\mathscr{S}}$,
where we consider the M\"obius subshift $(X_{\mob},S)$ and $\widehat{\nu}_{\mathscr{S}}$ stands for the relatively independent extension of the Mirsky measure $\nu_{\mathscr{S}}$ of the square-free system $(X_{\mob^2},S)$.\footnote{The measure-theoretic investigations of the square-free system $(X_{\mob^2},\nu_{\mathscr{S}},S)$ have been originated by Sarnak \cite{Sa} and Cellarosi and Sinai \cite{Ce-Si}: the Mirsky measure is ergodic and so is its relatively independent extension.}

(c) By (b) and Theorem~\ref{Erg1}, we obtain
that there exists a subsequence $(N_k)$ with
$ \delta(\{N_k:\:k\geq1\})=1$ such that
$(1/N_k)\sum_{n\leq N_k}\delta_{S^n\mob}\to \widehat{\nu}_{\mathscr{S}}.$

(d) By (c) and the proof of the implication ``Chowla implies Sarnak'' in \cite{Ab-Ku-Le-Ru1}, it follows that
for each zero entropy $(X,T)$, we have
$
\lim_{k\to\infty}(1/N_k)\sum_{n\leq N_k}f(T^nx)\mob(n)=0$ for all $f\in C(X)$ and $x\in X$, so Proposition~\ref{p:localS} follows.

\section{Functional formulation of Tao's result} \label{s:smm}

Our aim in this section is to prove a slight extension of Tao's result from \cite{Ta}:

\begin{Th}\label{banach}
Let $(\mathscr{B}_j,\|\cdot\|_j)$, $j=1,2$, be normed vector spaces  and assume that
$\mathscr{B}_1$ is separable.
 Let $(S_k)_{k\ge 1}$ be a
sequence of linear bounded operators from $\mathscr{B}_1$ to $\mathscr{B}_2$, such that for some $M$ we have, for each $f\in\mathscr{B}_1$ and each $k\ge1$,
\begin{equation}\label{bounded}
\|S_kf\|_2\le M\|f\|_1.~\footnote{Assuming that $\mathscr{B}_1$ is Banach and using the Uniform Boundedness Principle, we only need to assume that in~\eqref{bounded} we have $\sup_{k\in\N}\|S_kf\|_2<+\infty$.}
\end{equation}
 Let $\phi$ be a continuous, positive, strictly increasing and convex function on $[0,\infty)$ with $\phi(0)=0$.
 Suppose that there exists a subsequence $(N_s)\subset \N$ such that for any $f\in \mathscr{B}_1$,
setting
\begin{equation}\label{hh}
R_f(H):=\limsup_{s\to\infty}\,\frac{1}{\log {N_s}}
\sum_{1\le n \le N_s} \frac{1}{n}\phi\left( \left\|\frac{1}{H}\sum_{1\le h\le H} S_{n+h}f\right\|_2\right),
\end{equation}
we have
\begin{equation}\label{zero}
\lim_{H\to\infty}\,R_f(H)=0.
\end{equation}
Then there exists a set $\mathcal{N}$ of natural numbers with the property
\begin{equation}\label{property}
\lim_{s\to\infty}\,\frac{1}{\log {N_s}}\,\sum_{\mathcal{N}\ni N\le N_s} \frac{1}{N}=1,
\end{equation}
such that, for any $f\in \mathscr{B}_1$, we have
\begin{equation}\label{lio1A}
\lim_{N\to\infty,\,N\in \mathcal{N}}\,
\left\| \frac{1}{N}
\sum_{1\le n\le N} S_n f \right\|_2=0.
\end{equation}
Moreover, if $(N_s)=\N$, then  $\delta(\mathcal{N})=1$ and
\begin{equation}
\label{lio1AA}
\lim_{N\to\infty}\,
\left\|
\frac{1}{\log N}
\sum_{1\le n\le N} \frac{S_n f}{n} \right\|_2
=0.
\end{equation}
\end{Th}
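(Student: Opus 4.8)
The plan is to transplant Tao's second-moment argument from \cite{Ta} into this functional framework. First I would reduce to a countable family: since $\mathscr{B}_1$ is separable, fix a countable dense subset $\{f_i\}_{i\ge1}$, and note that because $\|S_nf\|_2\le M\|f\|_1$, both conclusions \eqref{lio1A} and \eqref{lio1AA} for every $f_i$ automatically propagate to every $f\in\mathscr{B}_1$ by a routine $\varepsilon/3$-argument. So it suffices to produce one set $\mathcal N$ satisfying \eqref{property} along which $c_m(f_i)\to0$ for each $i$, where for $f\in\mathscr{B}_1$ I write $c_m(f):=\bigl\|\tfrac1m\sum_{n\le m}S_nf\bigr\|_2$ and $b_n^{(H)}(f):=\bigl\|\tfrac1H\sum_{1\le h\le H}S_{n+h}f\bigr\|_2$, so that $R_f(H)$ of \eqref{hh} is the $\limsup$ along $(N_s)$ of the logarithmic averages of $\tfrac1n\phi\bigl(b_n^{(H)}(f)\bigr)$. (Call a set $B$ of \emph{relative logarithmic density $\theta$ along $(N_s)$} if $\tfrac1{\log N_s}\sum_{m\in B,\,m\le N_s}\tfrac1m\to\theta$.)

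The core of the argument is the inequality
\[
\limsup_{s\to\infty}\ \frac1{\log N_s}\sum_{m\le N_s}\frac1m\,\phi\bigl((c_m(f)-\varepsilon)_+\bigr)\ \le\ R_f(H),\qquad f\in\mathscr{B}_1,\ H\ge1,\ \varepsilon>0.
\]
To get it, I first estimate a ``Fej\'er error'': shifting the index, $\bigl\|\tfrac1m\sum_{n\le m}S_{n+h}f-\tfrac1m\sum_{n\le m}S_nf\bigr\|_2\le\tfrac{2hM\|f\|_1}{m}$, so averaging over $1\le h\le H$ gives $c_m(f)\le\tfrac1m\sum_{n\le m}b_n^{(H)}(f)+\tfrac{(H+1)M\|f\|_1}{m}$. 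Convexity of $\|\cdot\|_2$ and then Jensen's inequality for the convex function $\phi$ yield $\phi\bigl((c_m(f)-\tfrac{(H+1)M\|f\|_1}{m})_+\bigr)\le\tfrac1m\sum_{n\le m}\phi\bigl(b_n^{(H)}(f)\bigr)$, and since $(H+1)M\|f\|_1/m\to0$ the left side dominates $\phi\bigl((c_m(f)-\varepsilon)_+\bigr)$ once $m\ge m_0(f,H,\varepsilon)$. Finally I apply the partial-summation identity \eqref{sparts} with $a_n=\phi\bigl(b_n^{(H)}(f)\bigr)\ge0$: it exhibits $\tfrac1{\log N_s}\sum_{n\le N_s}\tfrac1n\phi(b_n^{(H)}(f))$ as (essentially) a convex combination of the Ces\`aro averages $\tfrac1m\sum_{n\le m}\phi(b_n^{(H)}(f))$, each of which is $\ge\phi\bigl((c_m(f)-\varepsilon)_+\bigr)$ for $m\ge m_0$; dropping the finitely many $m<m_0$ (negligible after dividing by $\log N_s$) and taking $\limsup_s$ gives the displayed bound, and letting $H\to\infty$ with \eqref{zero} forces its left-hand side to be $0$ for every $\varepsilon>0$. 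A Chebyshev-type step finishes the paragraph: on $B_\varepsilon(f):=\{m:c_m(f)>2\varepsilon\}$ one has $\phi\bigl((c_m(f)-\varepsilon)_+\bigr)\ge\phi(\varepsilon)>0$ (here I use that $\phi$ is strictly increasing with $\phi(0)=0$), hence $B_\varepsilon(f)$ has relative logarithmic density $0$ along $(N_s)$, for every $\varepsilon>0$ and every $f$.

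Now I would diagonalize over the countable family of bad sets $\bigl\{B_{1/j}(f_i):i,j\ge1\bigr\}$, re-enumerated as $(B_k)_{k\ge1}$, each of relative logarithmic density $0$ along $(N_s)$. Choose $L_1<L_2<\cdots$ along $(N_s)$ increasing so fast that whenever $N_s\ge L_j$ one has $\tfrac1{\log N_s}\sum_{m\in B_i,\,m\le N_s}\tfrac1m<4^{-j}$ for all $i\le j$, and set $\mathcal N:=\N\setminus\bigcup_{j\ge1}\bigl((B_1\cup\cdots\cup B_j)\cap[L_j,L_{j+1})\bigr)$. The nesting of the removed blocks forces $\mathcal N\cap B_k\subset[1,L_k)$ to be finite for each fixed $k$, so $c_m(f_i)\to0$ as $m\to\infty$ in $\mathcal N$ for every $i$, hence for every $f$; this gives \eqref{lio1A}. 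For \eqref{property}, split $[1,N_s]$ at a fixed level $L_{J_0}$ (whose log-contribution is $O(1)=o(\log N_s)$), and on the blocks $[L_j,L_{j+1})$ with $j\ge J_0$ use the chosen bounds together with $\sum_j j\,4^{-j}<\infty$; letting $J_0\to\infty$ shows $\limsup_s\tfrac1{\log N_s}\sum_{m\in\N\setminus\mathcal N,\,m\le N_s}\tfrac1m=0$, i.e.\ $\mathcal N$ has relative logarithmic density $1$ along $(N_s)$. Finally, when $(N_s)=\N$ this says exactly $\delta(\mathcal N)=1$, and \eqref{lio1AA} follows from \eqref{lio1A}: by \eqref{sparts} with $a_n=S_nf$, the logarithmic average $\tfrac1{\log N}\sum_{n\le N}\tfrac{S_nf}{n}$ is an (essentially) convex combination of the vectors $\sigma_m:=\tfrac1m\sum_{n\le m}S_nf$, and since $\|\sigma_m\|_2=c_m(f)\le M\|f\|_1$ is bounded and tends to $0$ along $\mathcal N$ while $\tfrac1{\log N}\sum_{m\notin\mathcal N,\,m\le N}\tfrac1m\to0$, this combination has norm tending to $0$.

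The step I expect to be the main obstacle is the core estimate of the second paragraph: one must interleave the three passages to the limit ($\limsup_s$, then $H\to\infty$, then $\varepsilon\to0$) in exactly the right order, and the unavoidable Fej\'er error $(H+1)M\|f\|_1/m$ is precisely what forces the truncation $(c_m(f)-\varepsilon)_+$ inside $\phi$ and hence the extra limit in $\varepsilon$. A secondary point requiring care is the bookkeeping in the diagonalization, where the growth rates of the $L_j$ must be chosen so that $\mathcal N$ ends up with relative logarithmic density exactly $1$ rather than merely positive.
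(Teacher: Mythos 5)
Your proposal is correct: every step checks out (the Fej\'er-type boundary estimate, the Jensen step for the convex $\phi$, the use of \eqref{sparts} to dominate the logarithmic averages of $\phi\bigl((c_m(f)-\varepsilon)_+\bigr)$ by $R_f(H)$, the Chebyshev step with threshold $\phi(\varepsilon)$, and the diagonalization with the blocks $[L_j,L_{j+1})$), and the interchange of the three limits is handled properly since the finitely many $m<m_0(f,H,\varepsilon)$ are negligible after dividing by $\log N_s$. The underlying mechanism is the same as in the paper -- it is Tao's shift/second-moment argument -- but your bookkeeping is genuinely different. The paper first converts hypothesis \eqref{zero} into Ces\`aro information via Lemma~\ref{Cor1} (Markov plus a square-root trick, after \eqref{sparts}), producing for each $H$ a set $\mathcal N_{f,H}$ of relative logarithmic density at least $1-\sqrt{R_f(H)}$ on which the Ces\`aro average of $\phi\bigl(\|\frac1H\sum_{h\le H}S_{n+h}f\|_2\bigr)$ is at most $\sqrt{R_f(H)}$; only on such $N$ (with $N\ge H^2$) does it run the Jensen and shift decomposition, which forces it to invert $\phi$ through $\psi(s)=\phi^{-1}(\sqrt s)$, and it then applies Lemma~\ref{Sets} twice, over $H_0$ and over a dense sequence $(f_k)$. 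You reverse the order: you run the shift comparison and Jensen for \emph{every} $m$, absorb the boundary error $(H+1)M\|f\|_1/m$ into the truncation $(c_m(f)-\varepsilon)_+$, get one clean inequality bounding the logarithmic averages of $\phi\bigl((c_m(f)-\varepsilon)_+\bigr)$ by $R_f(H)$, and then need only a single Markov step and a single diagonalization over the countable family $\{B_{1/j}(f_i)\}$ (your explicit construction of $\mathcal N$ is an inline proof of Lemma~\ref{Sets} in complemented form, and your last step reproves Lemma~\ref{log}). What your route buys: no use of $\phi^{-1}$ or the $\sqrt\gamma$ gymnastics, and fewer extraction steps. What the paper's route buys: the auxiliary statements are isolated as lemmas because they are reused elsewhere (Lemma~\ref{Sets} in Section~\ref{s:LogPNT}, Lemma~\ref{log} in Proposition~\ref{p:localS}), and restricting to good scales $N\ge H^2$ keeps the error terms uniform without your extra $\varepsilon$-truncation.
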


The proof of the above theorem requires a few lemmas.

\begin{Lemma}\label{S}
Let $G: \N\to \R_+$.
Suppose that for some
$\gamma\in (0,1)$
and for some subsequence $(N_s)\subset\N$, we have
\begin{equation}\label{C1}
\limsup_{s\to \infty}\, \frac{1}{\log {N_s}}\sum_{1\le n \le N_s} \frac{G(n)}{n}
\le \gamma.
\end{equation}
Then, for the set $M\subset \N$ given by
$
M:=\left\{n:\,G(n)< \sqrt{\gamma}\right\}$,
we have
\begin{equation}\label{R11}
\liminf_{s\to\infty}\frac{1}{\log {N_s}}\sum_{n\le N_s,\,n\in M}\,\frac{1}{n}\ge 1-\sqrt{\gamma}.
\end{equation}
\end{Lemma}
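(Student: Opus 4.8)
The plan is to run a direct Markov-type (Chebyshev) inequality relative to the logarithmic weights $1/n$. First I would pass to the complementary set $M^{c}:=\{n:\,G(n)\ge\sqrt\gamma\}$ and note that for $n\in M^{c}$ one has $G(n)/n\ge\sqrt\gamma/n$; since $G\ge0$ everywhere, dropping the indices not in $M^{c}$ gives, for every $s$,
$$\sqrt\gamma\sum_{n\le N_s,\,n\in M^{c}}\frac1n\ \le\ \sum_{1\le n\le N_s}\frac{G(n)}n.$$
Dividing by $\log N_s$ and taking the $\limsup$ over $s$, hypothesis \eqref{C1} yields
$$\limsup_{s\to\infty}\ \frac1{\log N_s}\sum_{n\le N_s,\,n\in M^{c}}\frac1n\ \le\ \sqrt\gamma.$$

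Next I would invoke the elementary estimate $\sum_{1\le n\le N}\frac1n=\log N+O(1)$, so that $\frac1{\log N_s}\sum_{1\le n\le N_s}\frac1n\to1$ as $s\to\infty$. Splitting
$$\sum_{n\le N_s,\,n\in M}\frac1n=\sum_{1\le n\le N_s}\frac1n-\sum_{n\le N_s,\,n\in M^{c}}\frac1n$$
and taking the $\liminf$ in $s$, the first term contributes exactly $1$ while the subtracted term contributes at least $-\sqrt\gamma$ by the bound just obtained; this gives precisely \eqref{R11}.

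There is no genuine obstacle here: the one point deserving a sentence of care is the passage from a $\limsup$-bound on the $M^{c}$-sum to a $\liminf$-bound on the $M$-sum, which is clean only because the total logarithmic mass $\frac1{\log N_s}\sum_{n\le N_s}\frac1n$ actually converges to $1$ (rather than merely having a controlled $\limsup$ and $\liminf$), so nothing is lost in the subtraction. The choice of threshold $\sqrt\gamma$ is what makes the two appearances of $\sqrt\gamma$ in the statement match: it is small enough that $G/n\ge\sqrt\gamma/n$ on $M^{c}$ only forces the complementary logarithmic mass to be $\le\sqrt\gamma$, and it is precisely the cutoff defining $M$ via $G<\sqrt\gamma$.
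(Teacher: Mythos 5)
Your proof is correct and is essentially the paper's own argument: a Markov/Chebyshev bound on the complement $\{n:\,G(n)\ge\sqrt\gamma\}$ with respect to the logarithmic weights, combined with the fact that $\frac{1}{\log N_s}\sum_{n\le N_s}\frac1n\to1$, yielding the liminf bound $1-\sqrt\gamma$ after subtraction. The only cosmetic difference is that the paper keeps the inequality in terms of $\sum_{n\le N_s}G(n)/n$ and divides by $\sqrt\gamma$ at the end, whereas you first extract the limsup bound $\le\sqrt\gamma$ on the complementary logarithmic mass; the content is identical.
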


\begin{proof}
Let $Q=\N\setminus M$, so that
$
Q=\left\{n:\,G(n)\ge  \sqrt{\gamma}\right\}$.
By Markov's inequality, we obtain
\begin{align*}
\frac{1}{\log {N_s}}\sum_{1\le n \le N_s} \frac{G(n)}{n}
&\ge \frac{\sqrt{\gamma}}{\log {N_s}}\sum_{n\in Q,\, n\le N_s} \frac{1}{n}=\\
&    \frac{\sqrt{\gamma}}{\log {N_s}}\sum_{n\le N_s} \frac{1}{n}   -\frac{\sqrt{\gamma}}{\log {N_s}}\sum_{n\in M,\, n\le N_s} \frac{1}{n},
\end{align*}
hence
\[
\frac{1}{\log {N_s}}\sum_{n\in M,\, n\le N_s} \frac{1}{n}\ge
\frac{1}{\log {N_s}}\sum_{n\le N_s} \frac{1}{n}-\frac{1}{\sqrt{\gamma}\log {N_s}}\sum_{1\le n \le N_s} \frac{G(n)}{n},
\]
and (\ref{R11}) holds in view of~\eqref{C1}.
\end{proof}

\begin{Lemma}\label{Cor1}
Assume that $F: \N\to \R_+$ is bounded and
satisfies
\begin{equation}\label{C10}
\limsup_{s\to \infty}\, \frac{1}{\log {N_s}}\sum_{1\le n \le N_s} \frac{F(n)}{n}
\le \gamma,\quad \gamma\in (0,1),
\end{equation}
for a subsequence $(N_s)$.
Then, for the set $T\subset \N$ given by
\begin{equation}\label{setM0}
T:=\left\{N:\,\frac{1}{N}\sum_{1\le n\le N} F(n)\le \sqrt{\gamma}\right\},
\end{equation}
we have
\begin{equation}\label{R110}
\liminf_{s\to\infty}\frac{1}{\log {N_s}}\sum_{n\in T,n\le N_s}\,\frac{1}{n}\ge 1-\sqrt{\gamma}.
\end{equation}
\end{Lemma}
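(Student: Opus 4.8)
The plan is to deduce Lemma~\ref{Cor1} from Lemma~\ref{S} applied to the sequence of Ces\`aro averages of $F$. Put
\[
G(N):=\frac1N\sum_{1\le n\le N}F(n),\qquad N\in\N,
\]
so that $G:\N\to\R_+$ and, by definition, $T=\{N:\ G(N)\le\sqrt{\gamma}\}$. The first step is to check that $G$ satisfies hypothesis~\eqref{C1} of Lemma~\ref{S}, with the same constant $\gamma$ and the same subsequence $(N_s)$. For this I would apply the summation-by-parts identity~\eqref{sparts} with $a_n:=F(n)$, so that $s_n=nG(n)$ and
\[
\sum_{1\le n\le N}\frac{F(n)}n=\sum_{1\le n\le N-1}\frac{G(n)}{n+1}+G(N).
\]
Subtracting this from $\sum_{1\le n\le N}\frac{G(n)}n$ leaves $\sum_{1\le n\le N-1}\frac{G(n)}{n(n+1)}+G(N)\bigl(\tfrac1N-1\bigr)$, which is bounded uniformly in $N$ because $F$ (hence $G$) is bounded, say by $C$, and $\sum_{n\ge1}\frac1{n(n+1)}<\infty$. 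Dividing by $\log N_s$ and letting $s\to\infty$, this $O(1)$ error disappears, so~\eqref{C10} yields
\[
\limsup_{s\to\infty}\frac1{\log N_s}\sum_{1\le n\le N_s}\frac{G(n)}n\le\gamma,
\]
which is exactly~\eqref{C1} for $G$.

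Now I would invoke Lemma~\ref{S} for this $G$ (recall $\gamma\in(0,1)$): the set $M:=\{n:\ G(n)<\sqrt{\gamma}\}$ satisfies
\[
\liminf_{s\to\infty}\frac1{\log N_s}\sum_{n\le N_s,\,n\in M}\frac1n\ge1-\sqrt{\gamma}.
\]
Since $G(n)<\sqrt{\gamma}$ implies $G(n)\le\sqrt{\gamma}$, we have $M\subset T$, whence $\sum_{n\in T,\,n\le N_s}\frac1n\ge\sum_{n\in M,\,n\le N_s}\frac1n$ for every $s$, and~\eqref{R110} follows upon taking $\liminf$.

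The only point that needs any care is the first step, namely verifying that the boundary and tail terms produced by~\eqref{sparts} contribute $o(\log N_s)$ so that the hypothesis on $F$ transfers to its Ces\`aro averages $G$; this is exactly where the boundedness of $F$ enters, and it is immediate once~\eqref{sparts} is written out. The remainder of the argument is a direct citation of Lemma~\ref{S} together with the set inclusion $M\subset T$, so I expect no genuine obstacle here.
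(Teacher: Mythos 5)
Your proposal is correct and follows essentially the same route as the paper: the same summation-by-parts identity \eqref{sparts} applied to $a_n=F(n)$ to transfer the hypothesis \eqref{C10} to the Ces\`aro averages $G$ (using boundedness to absorb the $O(1)$ discrepancy between $\sum G(n)/(n+1)$ and $\sum G(n)/n$), followed by Lemma~\ref{S} and the inclusion $\{G<\sqrt{\gamma}\}\subset T$. You merely spell out a couple of details the paper leaves implicit; there is no gap.
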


\begin{proof}
Set $G(n):=\frac{1}{n}\sum_{1\le m\le n} F(m)$.
By (\ref{sparts}), we have
\[
\sum_{1\le n \le N-1}\frac{G(n)}{n+1}
=\sum_{1\le n\le N} \frac{F(n)}{n}-\frac{1}{N}\sum_{1\le n\le N} F(n),
\]
so that, by (\ref{C10}), we  have
$$
\limsup_{s\to\infty}\,\frac{1}{\log {N_s}}\sum_{1\le n \le N_s}\frac{G(n)}{n}=
\limsup_{s\to\infty}\,\frac{1}{\log {N_s}}\sum_{1\le n \le N_s-1}\frac{G(n)}{n+1}
\le \gamma.
$$
Then, by Lemma \ref{S}, we obtain
(\ref{R110}) for the set $T$ defined by (\ref{setM0}).
\end{proof}

\begin{Lemma}[see Tao \cite{Ta}]\label{Sets}
Assume that $M_k\subset \N$, $k\in \N$, and that
there exists an increasing sequence $(N_s)\subset\N$ such that, for each $k$,
\[
\lim_{s\to\infty}\,\frac{1}{\log {N_s}}\sum_{n\in M_k,\,n\le N_s}\,\frac{1}{n}=1,\quad k\in \N.
\]
Then there exists a subset $M\subset \N$ such that
\begin{equation}\label{D1}
\lim_{s\to\infty}\,\frac{1}{\log {N_s}}\sum_{n\in M,\,n\le N_s}\,\frac{1}{n}=1,
\end{equation}
and such that, for any $k\in \N$, there exists ${s_k}$ with
\begin{equation}\label{D2}
M\cap \{n:\,n\ge N_{s_k}\}\subset M_k.
\end{equation}
\end{Lemma}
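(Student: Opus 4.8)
The plan is to use a diagonal argument against the sequence $(N_s)$. Fix the hypothesis that for each fixed $k$ we have $\frac{1}{\log N_s}\sum_{n\in M_k,\,n\le N_s}\frac1n\to 1$ as $s\to\infty$. For each $k$, consider the finite intersection $\widetilde M_k:=\bigcap_{j\le k} M_j$; since a finite intersection of sets each of full logarithmic density along $(N_s)$ still has full logarithmic density along $(N_s)$ (the complement of $\widetilde M_k$ has logarithmic density $0$ along $(N_s)$, being a finite union of sets of logarithmic density $0$ along $(N_s)$), we may as well assume the $M_k$ are decreasing in $k$. First I would, for each $k$, choose $s_k$ strictly increasing in $k$ large enough that for all $s\ge s_k$ one has
\[
\frac{1}{\log N_s}\sum_{n\in M_k,\,n\le N_s}\frac1n \;\ge\; 1-\frac1k .
\]

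The next step is to define $M$ by splicing together tails of the $M_k$: set
\[
M\;:=\;\bigcup_{k\ge 1}\Bigl(M_k\cap\{n:\;N_{s_k}\le n< N_{s_{k+1}}\}\Bigr),
\]
together with, say, $M\cap\{n<N_{s_1}\}:=\{n:\,n<N_{s_1}\}$ to fix the (irrelevant) bottom piece. Property~\eqref{D2} is then immediate: if $n\in M$ and $n\ge N_{s_k}$, then $n$ lies in some block $[N_{s_j},N_{s_{j+1}})$ with $j\ge k$, so $n\in M_j\subset M_k$ by monotonicity. It remains to verify~\eqref{D1}, the full logarithmic density of $M$ along $(N_s)$. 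The point is that for $N_s$ with $s_k\le s< s_{k+1}$, the set $M\cap[1,N_s]$ agrees with $M_k\cap[1,N_s]$ except on the union of the lower blocks $[N_{s_j},N_{s_{j+1}})$ for $j<k$; but on each such block $M$ contains $M_j$, and $M_j\cap[N_{s_j},N_s]$ already has logarithmic mass at least $(1-1/j)\log N_s$ minus the mass below $N_{s_j}$. Chasing the inequalities, for $s\ge s_k$ one gets
\[
\frac{1}{\log N_s}\sum_{n\in M,\,n\le N_s}\frac1n\;\ge\;1-\frac1k,
\]
so letting $s\to\infty$ (hence forcing $k\to\infty$) gives $\liminf_s\frac{1}{\log N_s}\sum_{n\in M,\,n\le N_s}\frac1n=1$, and since this quantity is bounded by $1$ the limit is $1$.

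The only genuinely delicate point is the bookkeeping in the last step: one must be careful that the "error" coming from the low blocks $[N_{s_j},N_{s_{j+1}})$, $j<k$, does not accumulate. This is handled by observing that on the block indexed by $j$ we are using $M_j\supset M_k$ (by monotonicity), so in fact $M\cap[N_{s_j},N_{s_{j+1}})\supset M_k\cap[N_{s_j},N_{s_{j+1}})$ for every $j$ with $s_j<s$; consequently $M\cap[N_{s_1},N_s]\supset M_k\cap[N_{s_1},N_s]$ whenever $s\ge s_k$, and the whole estimate reduces to the single lower bound for $M_k$ already secured, minus the logarithmic mass of $[1,N_{s_1}]$, which is a fixed constant and hence negligible after dividing by $\log N_s\to\infty$. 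With the monotonization step done up front, there is really nothing left to accumulate, and~\eqref{D1} follows cleanly.
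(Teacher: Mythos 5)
Your proposal is correct and follows essentially the same route as the paper: monotonize the $M_k$ by finite intersections, choose $s_k$ so that $M_k$ has logarithmic mass at least $1-1/k$ along all $N_s$ with $s\ge s_k$, and splice tails of the $M_k$ over the blocks $[N_{s_k},N_{s_{k+1}})$. The density bookkeeping you flag as delicate is handled exactly as in the paper, via the monotonicity $M_j\supset M_k$ for $j\le k$ on the lower blocks.
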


\begin{proof}
Replacing, if necessary, each $M_k$ by $M_k\cap\bigcap_{k'<k}M_{k'}$, we may assume without loss of generality that $M_{k+1}\subset M_k$, $k\in {\N}$.

Let us choose an increasing sequence $(s_k)$ such that, for each $k$,
\[ s \ge s_k\Rightarrow \frac{1}{\log {N_s}}\sum_{n\in M_{k},\,n\le N_s}\,\frac{1}{n}\ge 1-\frac1{k}.
\]
We set
$
M:=M_1\cap\left[1,N_{s_1}\right] \cup \bigcup_{k=2}^\infty \left(M_k\cap \left(N_{s_{k-1}},N_{s_k}\right]\right)
$
and verify that $M$ satisfies the desired properties (\ref{D1}) and (\ref{D2}).
\end{proof}

{The following is classical.}

\begin{Lemma}\label{log}
Let $(a_n)_{n\ge 1}$ be a bounded sequence 
in a normed vector space $(\mathscr{B},\|\cdot\|)$.
Suppose that there exists a subsequence $\mathcal{N} \subset\N$, $\delta(\mathcal{N})=1$, 
such that
\begin{equation}\label{B}
\lim_{N\to\infty, N\in\mathcal{N}}\,
\left\|
\frac{1}{N}\sum_{1\le n \le N} a_n \right\|
=0.
\end{equation}
Then
\begin{equation}\label{C}
\lim_{N\to\infty}\,
\left\|
\frac{1}{\log N}\sum_{1\le n \le N} \frac{a_n}{n} \right\| =0.
\end{equation}
\end{Lemma}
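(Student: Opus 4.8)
The plan is to deduce~\eqref{C} from~\eqref{B} by the elementary summation by parts recalled in~\eqref{sparts}, combined with the defining property of a set of logarithmic density~$1$. Set $C:=\sup_{n\ge1}\|a_n\|<\infty$, $s_n:=\sum_{1\le j\le n}a_j$, and $b_n:=s_n/n$, so that the hypothesis reads $\|b_n\|\le C$ for every $n$ and $b_n\to 0$ as $\mathcal N\ni n\to\infty$. Identity~\eqref{sparts} is purely algebraic, hence valid in $\mathscr B$; dividing it by $\log N$ gives
\[
\frac{1}{\log N}\sum_{1\le n\le N}\frac{a_n}{n}
=\frac{1}{\log N}\sum_{1\le n\le N-1}\frac{b_n}{n+1}+\frac{b_N}{\log N}.
\]
Since $\|b_N\|/\log N\le C/\log N\to 0$, it suffices to show that the logarithmically weighted averages $\frac1{\log N}\sum_{1\le n\le N-1}b_n/(n+1)$ tend to $0$.

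Fix $\varepsilon>0$ and choose $n_0$ such that $\|b_n\|<\varepsilon$ whenever $n\ge n_0$ and $n\in\mathcal N$. Splitting $\sum_{1\le n\le N-1}\|b_n\|/(n+1)$ according to whether $n<n_0$, or $n_0\le n\le N-1$ with $n\in\mathcal N$, or $n_0\le n\le N-1$ with $n\notin\mathcal N$, we bound the three pieces respectively by the constant $C\sum_{n<n_0}1/(n+1)$, by $\varepsilon\sum_{1\le n\le N}1/(n+1)=\varepsilon\log N+O(1)$, and by $C\sum_{n\le N,\,n\notin\mathcal N}1/(n+1)$. After division by $\log N$, the first quantity tends to $0$, the second is bounded by $\varepsilon+o(1)$, and the third tends to $0$ precisely because $\mathcal N$ has logarithmic density~$1$, i.e.\ $\N\setminus\mathcal N$ has logarithmic density~$0$. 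Hence $\limsup_{N\to\infty}\frac1{\log N}\sum_{1\le n\le N-1}\|b_n\|/(n+1)\le\varepsilon$, and letting $\varepsilon\to 0$ yields~\eqref{C}.

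There is no genuinely hard step here: this is exactly the ``standard trick of summation by parts'' mentioned in the introduction. The only point deserving attention is that the weights $\frac1n-\frac1{n+1}=\frac1{n(n+1)}$ produced by~\eqref{sparts} must be regrouped as $\frac1{n+1}\cdot\frac{s_n}{n}$, that is, as an essentially ``$1/n$''-type weight acting on the Cesàro averages $b_n$ rather than as a summable weight acting on $s_n$; this is the content of the remark after~\eqref{sparts} that harmonic averages are convex combinations of Cesàro averages, and it is what makes logarithmic density the correct hypothesis.
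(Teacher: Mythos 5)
Your proof is correct and follows essentially the same route as the paper: the summation-by-parts identity~\eqref{sparts} rewritten in terms of the Ces\`aro averages $b_n=s_n/n$, followed by splitting the logarithmic average over $\mathcal N$ and its complement, using boundedness on the (log-density-zero) complement and smallness of $b_n$ along $\mathcal N$. You merely make explicit the $\varepsilon$--$n_0$ step that the paper leaves implicit, so there is nothing to add.
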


\begin{proof}
Let $\mathcal{M}=\N \setminus\mathcal{N}$, so that
\begin{equation}
 \label{eq:comp}
\lim_{N\to\infty}\,\frac{1}{\log N}\sum_{n\le N,\,n\in \mathcal{M}} \frac{1}{n}=0.
\end{equation}
By~\eqref{sparts}, we have
\[
\sum_{1\le n\le N} \frac{a_n}{n}=
\mathcal{E}_N+\sum_{1\le n \le N-1}\frac{\mathcal{E}_n}{n+1},\quad\text{where }
\mathcal{E}_n:=\frac{1}{n}\sum_{1\le m\le n} a_m.
\]
Setting $C:=\sup_n \|\mathcal{E}_n\|$, we obtain
\begin{align*}
\left\| \frac{1}{\log N} \sum_{1\le n \le N} \frac{a_n}n \right\|
&\le
\frac{C}{\log N}+
\left\| \frac{1}{\log N} \sum_{n\le N,\,n\in \mathcal{M}} \frac{\mathcal{E}_n}{n+1}\right\|
+ \left\| \frac{1}{\log N} \sum_{n\le N,\,n\in \mathcal{N}} \frac{\mathcal{E}_n}{n+1} \right\|   \\
&  \le\frac{C}{\log N}+
\frac{C}{\log N}\sum_{n\le N,\,n\in \mathcal{M}} \frac{1}{n}
+\left\| \frac{1}{\log N}\sum_{n\le N,\,n\in \mathcal{N}} \frac{\mathcal{E}_n}{n+1}\right\|,
\end{align*}
and then assertion (\ref{C}) follows from (\ref{eq:comp}) and (\ref{B}).
\end{proof}

\begin{proof}[Proof of Theorem~\ref{banach}]
Let $f\in \mathscr{B}_1$.
By (\ref{hh}), (\ref{zero}) and Lemma \ref{Cor1},
we obtain  that for any fixed large $H\in \N$, there exists
a set $\mathcal{N}_{f,H}$ with the property
\[
\liminf_{s\to\infty}\, \frac{1}{\log {N_s}}
\sum_{\mathcal{N}_{f,H}\ni N\le N_s}\frac{1}{N}\ge 1-\sqrt{R_f(H)},
\]
and such that, for $N\in  \mathcal{N}_{f,H}$, we have
\[
\frac{1}{N}\sum_{1\le n\le N}
\phi\left(\left\|\frac{1}{H}\sum_{1\le h\le H} S_{n+h}f\right\|_2\right)
\le
\sqrt{R_f(H)}.
\]
By deleting at most finitely many elements, we may assume that
$\mathcal{N}_{f,H}$ consists
only of elements of size at least $H^2$.
For any $H_0$, if we set
$
\mathcal{N}_{f, \ge H_0}:=\bigcup_{H\ge H_0}\,\mathcal{N}_{f,H},
$
then $\mathcal{N}_{f,\ge H_0}$ satisfies
$
\lim_{s\to\infty}(1/\log {N_s})\sum_{\mathcal{N}_{f,\ge H_0}\ni N\le N_s}\,\frac{1}{N}=1$.
By Lemma~\ref{Sets}, we can
find a set $\mathcal{N}_f$ of natural numbers with
\[
\lim_{s\to\infty}\frac{1}{\log {N_s}}\sum_{\mathcal{N}_f\ni N\le N_s}\,\frac{1}{N}=1,
\]
and such that, for every $H_0$, every sufficiently large element of $\mathcal{N}_f$ lies in $\mathcal{N}_{f,\ge H_0}$.
Thus, for every sufficiently large $N\in \mathcal{N}_f$, one has
\[
\frac{1}{N}\sum_{1\le n\le N}
\phi\left(\left\|\frac{1}{H}\sum_{1\le h\le H} S_{n+h}f\right\|_2\right)
\le\sqrt{R_f(H)},
\]
for some $H\ge H_0$ with $N\ge H^2$. By the monotonicity of $\phi$ and Jensen's inequality, this implies that
\[
\phi\left(\frac{1}{NH}\left\|\sum_{1\le n\le N}
\sum_{1\le h\le H} S_{n+h}f\right\|_2\right)
\le \phi\left(\frac{1}{N}\sum_{1\le n\le N}
\left\|\frac{1}{H}\sum_{1\le h\le H} S_{n+h}f\right\|_2\right)
\]
\[
\le
\frac{1}{N}\sum_{1\le n\le N}
\phi\left(\left\|\frac{1}{H}\sum_{1\le h\le H} S_{n+h}f \right\|_2
\right)\le \sqrt{R_f(H)},
\]
so that, setting $\psi(s):=\phi^{-1}(\sqrt{s})$, $s>0$, we get
\begin{equation}\label{LLL}
\frac{1}{NH}\left\|\sum_{1\le n\le N}
\sum_{1\le h\le H} S_{n+h}f\right\|_2\le \psi(R_f(H))\to 0,\quad H\to\infty.
\end{equation}

Next, by computation, we get
\[
\sum_{1\le n\le N}
\sum_{1\le h\le H} S_{n+h}f
= \sum_{1\le h\le H}
\sum_{1\le n\le N} S_{n+h}f
=\sum_{1\le h\le H}\sum_{m=h+1}^{h+N} S_m f=
\]
\[
H\sum_{m=1}^N  S_mf
+\sum_{1\le h\le H}\sum_{m=N+1}^{N+h}  S_mf
-\sum_{1\le h\le H}\sum_{1\le m\le h}  S_mf,
\]
which gives
\begin{multline*}
\frac{1}{N}\sum_{m=1}^N  S_mf
=\frac{1}{HN}\sum_{1\le n\le N}
\sum_{1\le h\le H} S_{n+h}\\
-\frac{1}{HN}\sum_{1\le h\le H}\sum_{m=N+1}^{N+h}  S_mf
+\frac{1}{HN}\sum_{1\le h\le H}\sum_{1\le m\le h} S_mf.
\end{multline*}
Now, let us fix $H_0$. For any sufficiently large $N\in \mathcal{N}_f$, there exists $H\ge H_0$, with $H^2\le N$, such that $N\in\mathcal{N}_{f,H}$.
The triangular inequality yields
\begin{multline*}
\frac{1}{N}\left\|\sum_{m=1}^N  S_mf\right\|_2\le
\frac{1}{HN}\left\|\sum_{1\le n\le N}
\sum_{1\le h\le H} S_{n+h}f
\right\|_2\\
+\frac{1}{HN}\left\|\sum_{1\le h\le H}\sum_{m=N+1}^{N+h}  S_mf\right\|_2
+\frac{1}{HN}\left\|\sum_{1\le h\le H}\sum_{1\le m\le h} S_mf\right\|_2.
\end{multline*}
Using (\ref{LLL}), we can upper bound the first term in the right-hand side by $\psi(R_f(H))$. The sum of the other two terms can be upper bounded by
\[ \frac{M\|f\|_1}{HN}
\left(\sum_{1\le h\le H}\sum_{m=N+1}^{N+h} 1
+\sum_{1\le h\le H}\sum_{1\le m\le h} 1\right) \le \frac{2HM\|f\|_1}{N} \le
\frac{2M\|f\|_1}{H}. \]
We thus get that
\[
 \frac{1}{N}\left\|\sum_{m=1}^N  S_mf\right\|_2\le \sup_{H\ge H_0} \psi(R_f(H))+\frac{2M\|f\|_1}{H_0}.
\]
By letting $H_0$ go to infinity, we conclude that
$
\lim_{N\to\infty,\,N\in \mathcal{N}_f}\,\left\|(1/N)\sum_{m=1}^N S_m f\right\|_2=0$.

Let now $(f_k)_{k\ge 1}$ be a dense set in $\mathscr{B}_1$.
Then, by  Lemma~\ref{Sets}, we get a set $\mathcal{N}$ of natural
numbers with the property
$\lim_{s\to\infty}(1/\log N_s)\sum_{\mathcal{N}\ni N\le N_s}\,\frac{1}{N}=1$,
such that for any $k\in \N$,
\[
\lim_{N\to\infty,\,N\in \mathcal{N}}\frac{1}{N}\left\|\sum_{m=1}^N  S_mf_k\right\|_2=0.
\]
Take
$g\in \mathscr{B}_1$.
Then for any $\epsilon>0$ there exists $f_k$ such that
$
\|g-f_k\|_1\le \epsilon/M,
$
and then
\[
\frac{1}{N}\left\|\sum_{m=1}^N  S_m g\right\|_2\le
\frac{1}{N}\left\|\sum_{m=1}^N  S_mf_k\right\|_2+
\epsilon,
\]
which, by the above, yields
\[
\lim_{N\to\infty,\,N\in \mathcal{N}}\frac{1}{N}\left\|\sum_{m=1}^N  S_mg\right\|_2=0.
\]
So, statement (\ref{lio1A})
is proved. Assertion
(\ref{lio1AA}) follows from (\ref{lio1A}) by
Lemma \ref{log}.
\end{proof}

Note that in the sequel we will use
Theorem~\ref{banach} mainly for $\phi(s)=s$ (or sometimes for $\phi(s)=s^2$).

\section{Proof of Theorem~\ref{t:strongmomo}}

 To prove Theorem  \ref{t:strongmomo}  we will show a stronger result:

\begin{Th}\label{t:strongmomoA}
Let $(\mathscr{B}_j,\|\cdot\|_j)$, $j=1,2,$ be normed vector spaces and assume that $\mathscr{B}_1$ is separable.
 Let $(S_k)_{k\ge 1}$ be a
sequence of linear bounded operators from $\mathscr{B}_1$ to $\mathscr{B}_2$, such that for some $M>0$ we have, for each $f\in\mathscr{B}_1$ and each $k\ge1$,
\[
\|S_kf\|_2\le M\|f\|_1.
\]
Assume that $(S_k)_{k\ge 1}$  satisfies the property:
for all increasing sequences $(b_k)\subset\N$ with $b_{k+1}-b_k\to\infty$ and $f\in \mathscr{B}_1$, we have
\[
\lim_{K\to\infty}\,\frac1{\log b_{K+1}}\sum_{k\leq K}\left\|\sum_{b_k\leq n<b_{k+1}}\frac1n S_nf\right\|_2=0,
\]
 Then there exists  $A\subset\N$ with full logarithmic density: $\delta(A)=1$, such that for each $f\in \mathscr{B}_1$,
\[
 \lim_{A\ni N\to\infty}\ \left\|\frac1N\sum_{n\leq N} S_nf\right\|_2=0.
\]
\end{Th}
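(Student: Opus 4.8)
The plan is to deduce Theorem~\ref{t:strongmomoA} from Theorem~\ref{banach}, applied with the given $\mathscr B_1,\mathscr B_2,(S_k)$, with $(N_s)=\N$, and with $\phi(s)=s$ (an admissible choice: continuous, positive, strictly increasing, convex, vanishing at $0$). For this choice the conclusion of Theorem~\ref{banach} — assertion~\eqref{lio1A} together with the ``moreover'' clause, which gives $\delta(\mathcal N)=1$ — is exactly the conclusion of Theorem~\ref{t:strongmomoA} with $A:=\mathcal N$, and the boundedness hypothesis~\eqref{bounded} is assumed. So it suffices to verify hypothesis~\eqref{zero}: for each $f\in\mathscr B_1$, $\lim_{H\to\infty}R_f(H)=0$, where by~\eqref{hh} $R_f(H)=\limsup_{N\to\infty}\frac1{\log N}\sum_{1\le n\le N}\frac1n\bigl\|\frac1H\sum_{1\le h\le H}S_{n+h}f\bigr\|_2$. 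Fix such an $f$.

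First I would record a bookkeeping identity: writing $n=jH+r$ with $r\in\{0,\dots,H-1\}$, one has $\frac1H\sum_{1\le h\le H}S_{n+h}f=\frac1H\sum_{m\in J_j^{(r)}}S_mf$, where $J_j^{(r)}:=(jH+r,(j+1)H+r]$ is the $j$-th block of the partition of $\N$ into consecutive length-$H$ intervals shifted by $r$. Summing over $n\le N$, grouping by the residue $r$, and replacing the weight $1/n$ — which is constant on $J_j^{(r)}$ up to a factor $2$ — by $\sum_{m\in J_j^{(r)}}1/m$ (the discrepancy being, for $j\ge1$, at most $2H^2M\|f\|_1/(jH+r)^2$, hence summable in $j$ with total $O_{M,f}(1)$), I get
\[
\frac1{\log N}\sum_{1\le n\le N}\frac1n\Bigl\|\frac1H\sum_{1\le h\le H}S_{n+h}f\Bigr\|_2
\le\frac2H\sum_{r=0}^{H-1}\frac1{\log N}\sum_{\substack{j\ge1\\ jH+r\le N}}\Bigl\|\sum_{m\in J_j^{(r)}}\frac{S_mf}{m}\Bigr\|_2+O_{M,f}\Bigl(\frac{\log H}{\log N}\Bigr).
\]
Then I would argue by contradiction: suppose $R_f(H)\not\to0$, and fix $\varepsilon>0$ and an increasing sequence $(H_i)$ with $R_f(H_i)>2\varepsilon$ for all $i$. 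For each $i$, choose $N_i$ (arbitrarily large, with constraints imposed below) realizing the $\limsup$ so that the left-hand side above exceeds $2\varepsilon$ for $H=H_i$, $N=N_i$. By the pigeonhole principle over the $H_i$ residues there is $r_i$ for which the $r$-summand on the right is at least the average over $r$; once $N_i$ is large this yields that the length-$H_i$ partition of $[1,N_i]$ shifted by $r_i$ satisfies $\sum_{J}\bigl\|\sum_{m\in J}\frac{S_mf}{m}\bigr\|_2\ge\frac\varepsilon2\log N_i$.

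Next I would glue these into a single partition $(b_k)\subset\N$ with $b_{k+1}-b_k\to\infty$: stage $i$ consists of the length-$H_i$, shift-$r_i$ blocks lying (roughly) between $N_{i-1}$ and $N_i$, and between consecutive stages I insert one transition block of length between $H_{i+1}$ and $2H_{i+1}$; then from the first block of stage $i$ on all gaps are $\ge H_i$, and $H_i\to\infty$. The blocks of stage $i$ below $N_{i-1}+H_i$ carry at most $O_{M,f}\bigl(\log(N_{i-1}+H_i)\bigr)$ of the above mass, so, choosing each $N_i$ so large that this is $\le\frac\varepsilon4\log N_i$, the stage-$i$ blocks retain mass $\ge\frac\varepsilon4\log N_i$. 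Letting $K_i$ be the index of the last block of stage $i$, we have $b_{K_i+1}\in[N_i-H_i,N_i]$, hence $\log b_{K_i+1}=(1+o(1))\log N_i$, and therefore $\frac1{\log b_{K_i+1}}\sum_{k\le K_i}\bigl\|\sum_{b_k\le n<b_{k+1}}\frac{S_nf}{n}\bigr\|_2\ge\frac\varepsilon8$ for all large $i$. Since $K_i\to\infty$, this contradicts the logarithmic strong MOMO hypothesis on $(S_k)$. Hence $R_f(H)\to0$ for every $f$, so~\eqref{zero} holds and Theorem~\ref{banach} finishes the argument.

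The main obstacle is precisely this verification of~\eqref{zero}; everything else is bookkeeping. Two points need care: (i) passing from the sliding $H$-average to shifted length-$H$ block partitions and controlling the weight-replacement error — this is harmless only because $\sum_{j\ge1}H^2/(jH)^2<\infty$, so the accumulated error stays $O_{M,f}(1)$ and is swallowed by $\log N_i\to\infty$; and (ii) keeping the glued partition's gaps tending to infinity while still capturing a fixed proportion of the accumulated mass at the correct logarithmic scale — this is what forces $(N_i)$ to grow fast (so that the mass below $N_{i-1}$ is negligible at scale $\log N_i$) and the transition blocks to grow. Note that $\phi(s)=s$ suffices here, in line with the remark after Theorem~\ref{banach}; a second-moment variant would require $\phi(s)=s^2$, which the first-moment logarithmic strong MOMO hypothesis does not supply.
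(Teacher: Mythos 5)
Your proof is correct and follows essentially the same route as the paper: you reduce to verifying hypothesis~\eqref{zero} of Theorem~\ref{banach} with $\phi(s)=s$, and you establish it by contradiction via pigeonholing over residues mod $H_i$, swapping the weight $1/n$ for the weights $1/m$ on each length-$H_i$ block, and gluing the resulting shifted block partitions into one sequence $(b_k)$ with gaps tending to infinity that still carries a fixed proportion of the logarithmic mass. This is precisely the content of the paper's Proposition~\ref{l:prawie} together with the Diagonalization Lemma~\ref{Ld}, which you have simply carried out inline.
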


The proof of Theorem~\ref{t:strongmomoA} will use the following intermediate result.

\begin{Prop}\label{l:prawie}
Let $(\mathscr{B}_j, \|\cdot\|_j)$, $j=1,2$ be normed vector spaces.
Let $(S_n)_{n\ge 1}$ be a sequence of linear bounded operators from $\mathscr{B}_1$ to $\mathscr{B}_2$,
such that
for any $(b_k)\subset \N$ with $0<b_{k+1}-b_k\to\infty$ as $k\to\infty$,
and any $f\in \mathscr{B}_1$ 
we have
\begin{equation}
\label{eq:premisse}
 \frac{1}{\log b_{K+1}}\sum_{k\le K}
\left\|\sum_{b_k\le n< b_{k+1}}\frac{1}{n}S_nf\right\|_2\tend{K}{\infty} 0.
\end{equation}
Then
\begin{equation}\label{MR}
\ov\lim_{H\to\infty}\ov\lim_{N\to\infty}\,
\frac{1}{\log N}\sum_{n\le N}\frac{1}{n}
\left\|\frac{1}{H}\sum_{h\le H} S_{n+h}f\right\|_2=0.
\end{equation}
\end{Prop}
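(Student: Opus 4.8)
\textbf{Proof plan for Proposition~\ref{l:prawie}.}

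The plan is to argue by contradiction: if \eqref{MR} fails, then there is $\eps>0$ and a sequence $H_j\to\infty$ such that, for each $j$,
\[
\ov\lim_{N\to\infty}\frac{1}{\log N}\sum_{n\le N}\frac1n\left\|\frac1{H_j}\sum_{h\le H_j}S_{n+h}f\right\|_2>\eps.
\]
From this I would like to manufacture a single sequence $(b_k)$ with $b_{k+1}-b_k\to\infty$ that violates the hypothesis \eqref{eq:premisse}. The idea is that the (large) block sums $\sum_{b_k\le n<b_{k+1}}\frac1n S_n f$ are, after averaging, close to averages of the ``windowed'' quantities $\frac1{H}\sum_{h\le H}S_{n+h}f$: if $b_{k+1}-b_k$ is of order $H$ for an appropriate $H$, then telescoping the inner sum (exactly as in the proof of Theorem~\ref{banach}, where $\sum_{n}\sum_{h\le H}S_{n+h}f = H\sum_m S_m f + \text{boundary terms}$) shows that the block sum over $[b_k,b_{k+1})$ differs from $\frac1{H}\sum_{b_k\le n<b_{k+1}}\sum_{h\le H}S_{n+h}f$ by boundary terms controlled by $M\|f\|_1\cdot H/b_k$, which is negligible once $b_k\gg H^2$ say.

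Concretely, here is the construction I would carry out. Fix a scale $H=H_j$ and a threshold $N_0$; on the dyadic-type range $(N_0, N_1]$ with $N_1$ chosen so that $\frac1{\log N_1}\sum_{N_0<n\le N_1}\frac1n$ is close to $1$ and $N_0\ge H^3$, choose the break points $b_k$ to be consecutive, spaced exactly $H$ apart (i.e. $b_{k+1}=b_k+H$), covering $(N_0,N_1]$. For the block $[b_k,b_k+H)$ one has
\[
\sum_{b_k\le n<b_k+H}\frac1n S_n f
\approx \frac1H\sum_{b_k\le n<b_k+H}\ \sum_{1\le h\le H}\frac1n S_{n+h}f,
\]
where the error comes both from replacing a single sum by a windowed double average (a telescoping identity, with boundary error $\lesssim M\|f\|_1 H/b_k$) and from the fact that $1/n$ is nearly constant on a block of length $H$ when $n\ge N_0\ge H^3$ (relative error $O(H/N_0)$). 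Summing $\frac1{\log N_1}\sum_k \|\cdot\|_2$ over all blocks in $(N_0,N_1]$ and using these approximations, the block-sum average is within $o_H(1)$ of
\[
\frac1{\log N_1}\sum_{N_0<n\le N_1}\frac1n\left\|\frac1H\sum_{h\le H}S_{n+h}f\right\|_2,
\]
which by the choice of $H=H_j$ and $N_1$ can be made $\ge \eps/2$. By iterating this over a rapidly increasing sequence of scales $N_0^{(1)}<N_1^{(1)}\le N_0^{(2)}<N_1^{(2)}\le\cdots$ with $H_j\to\infty$, and padding the gaps $(N_1^{(j)},N_0^{(j+1)}]$ with, say, a single huge block (or blocks whose length grows slowly), I would splice the break points into one increasing sequence $(b_k)$ with $b_{k+1}-b_k\to\infty$ for which $\limsup_K \frac1{\log b_{K+1}}\sum_{k\le K}\|\sum_{b_k\le n<b_{k+1}}\frac1n S_n f\|_2\ge \eps/2>0$, contradicting \eqref{eq:premisse}.

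\textbf{Main obstacle.} The delicate point is the bookkeeping required to glue together the finitely-many-blocks-at-each-scale into one global sequence while simultaneously (i) keeping $b_{k+1}-b_k\to\infty$, (ii) ensuring each scale's contribution survives the normalization by $\log b_{K+1}$ (so the padding blocks between scales must not dominate the logarithmic mass — this forces $N_0^{(j+1)}$ to be only polynomially larger than $N_1^{(j)}$, not super-exponentially larger, so that the padding block $(N_1^{(j)},N_0^{(j+1)}]$ contributes $o(1)$ to the relevant logarithmic average), and (iii) keeping the boundary-error terms summably small, which is why the blocks at scale $j$ must sit above height $H_j^3$. One must choose the order of quantifiers carefully: pick $\eps$ and $H_j\to\infty$ first, then at stage $j$ choose $N_0^{(j)}$ large enough (in terms of $H_j$ and everything constructed before) that both the $H_j/N_0^{(j)}$ errors and the padding contribution from the previous stage are $<1/j$, and $N_1^{(j)}$ large enough that $\ov\lim$ is genuinely realized on $(N_0^{(j)},N_1^{(j)}]$ up to $\eps/2$. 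Everything else is the telescoping identity already used in the proof of Theorem~\ref{banach} together with the elementary estimate $|1/n-1/b_k|\le H/b_k^2$ for $n\in[b_k,b_k+H)$.
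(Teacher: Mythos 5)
There is a genuine gap, and it sits exactly in the step you call "everything else is the telescoping identity". For a block of length $H$ (your choice $b_{k+1}=b_k+H$), the telescoping identity compares $\sum_{b_k\le n<b_{k+1}}\frac1n S_nf$ with $\frac1H\sum_{b_k\le n<b_{k+1}}\sum_{h\le H}\frac1n S_{n+h}f$, and the discrepancy (the flat sum over the block versus the "tent"-weighted sum over a window of length $2H$) involves on the order of $H$ terms, giving an error of order $M\|f\|_1\,H/b_k$. But the block quantity $\bigl\|\sum_{b_k\le n<b_{k+1}}\frac1n S_nf\bigr\|_2$ is itself at most of order $M\|f\|_1\,H/b_k$, so the error is of the same order as the quantity you are trying to track; requiring $b_k\gg H^2$ (or $H^3$) does not help, since both the main term and the error scale like $1/b_k$ — in Theorem~\ref{banach} the same telescoping is harmless only because there the "block" is $[1,N]$ with $N\ge H^2$, so the $O(H^2)$ boundary terms are negligible relative to the $NH$ terms of the double sum; here your block length equals $H$ and the boundary is everything. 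Moreover, even granting the vector identity, taking norms only gives, via the triangle inequality, $\bigl\|\sum_{b_k\le n<b_{k+1}}\frac1n S_nf\bigr\|_2\lesssim\sum_{b_k\le n<b_{k+1}}\frac1n\bigl\|\frac1H\sum_{h\le H}S_{n+h}f\bigr\|_2+\text{error}$, i.e.\ an \emph{upper} bound on the block average by the windowed average; the contradiction with \eqref{eq:premisse} requires the \emph{lower} bound, and that direction does not follow: within a block the overlapping window vectors can cancel in $\mathscr{B}_2$, so "block-sum average is within $o_H(1)$ of the windowed average" is not justified.

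The missing idea is the pigeonhole over residue classes modulo $H_\ell$, which is how the paper's proof avoids any misalignment between windows and blocks. From the negation of \eqref{MR} one writes the logarithmic average as $\frac1{H_\ell}\sum_{r=0}^{H_\ell-1}$ of the averages restricted to $n\equiv r\ [H_\ell]$, and picks $r_\ell$ whose restricted $\limsup$ is still $\ge\gamma$; for those $n$ the window $\{n+1,\dots,n+H_\ell\}$ \emph{is} exactly the block $[kH_\ell+r_\ell+1,(k+1)H_\ell+r_\ell+1)$, so the comparison with the block quantity is termwise, and the only correction is replacing the outside weight $\frac1n$ by $\frac1m$ inside (using $\frac1n=\frac1{n+h}+\frac{h}{n(n+h)}$, with total error controlled by $\frac1{\log N}\sum_{n\equiv r_\ell}\frac{H_\ell^2}{n^2}\to0$). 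This yields, for each $\ell$, an arithmetic-progression block sequence $b_{k,\ell}=kH_\ell+r_\ell+1$ along which the block average has $\limsup\ge\gamma$; the cross-scale splicing you sketch (your points (i)–(iii)) is then precisely the content of the Diagonalization Lemma~\ref{Ld}, which produces one sequence $(b_k)$ with $b_{k+1}-b_k\to\infty$ and $\limsup\ge\gamma/2$, contradicting \eqref{eq:premisse}. So your gluing step is in the right spirit, but without the residue-class selection (or an equivalent device aligning every counted window with a block) the core estimate of your plan fails.
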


We will prove this result by contraposition, and we will need the following lemma.

\begin{Lemma}[Diagonalization Lemma]\label{Ld}
Consider a family of sequences  $(g_{n,m})\subset\R_{+}$, $m> n,$ $m,n\ge 1$.
Suppose that for some families $(b_{k,\ell})_{k,\ell\ge 1}\subset \N$
with
\begin{equation}\label{bbb}
0<b_{k,\ell}<b_{k+1,\ell},\quad
\overline{\lim}_{\ell\to\infty}\underline{\lim}_{k\to\infty}\, (b_{k+1,\ell}-b_{k,\ell})=\infty,
\end{equation}
and  some $\gamma>0$, we have
\begin{equation}\label{delta}
\overline{\lim}_{K\to\infty}\,\frac{1}{\log b_{K+1,\ell}}\sum_{k=1}^K
g_{b_{k,\ell},b_{k+1,\ell}}\ge \gamma,\;  \text{ for all }\ell\in \N.
\end{equation}
Then there exists a sequence $(b_k)_{k\ge 1}\subset\N$ such that
\begin{equation}\label{AL}
0<b_k<b_{k+1},\quad b_{k+1}-b_k\tend{k}{\infty}\infty,
\end{equation}
and
\begin{equation}\label{BL}
\overline{\lim}_{K\to\infty}\,
\frac{1}{\log b_{K+1}}
\sum_{k=1}^{K}
g_{b_{k}, b_{k+1}}\ge \gamma/2.
\end{equation}
\end{Lemma}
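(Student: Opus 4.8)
\emph{Approach.} I would prove the lemma directly, by a concatenation (diagonal) construction: glue together, one after another, long initial segments of the one–parameter sequences $(b_{k,\ell})_k$ for a suitably chosen collection of parameters $\ell_1,\ell_2,\dots$. By the second condition in \eqref{bbb}, $\limsup_{\ell}\liminf_k(b_{k+1,\ell}-b_{k,\ell})=\infty$, so for each $j$ I can choose $\ell_j$ with $\liminf_{k\to\infty}(b_{k+1,\ell_j}-b_{k,\ell_j})\ge j$; let $m_j$ be such that $b_{k+1,\ell_j}-b_{k,\ell_j}\ge j$ whenever $k\ge m_j$. The role of $m_j$ is that if the $j$-th glued block uses only $\ell_j$-indices $\ge m_j$, then every consecutive gap inside it is $\ge j$; together with a matching control of the gaps at the junctions this will give \eqref{AL}.

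\emph{Construction of the blocks.} I build the blocks by induction on $j$. Assume blocks $1,\dots,j-1$ have been defined, the last value used being $v_{j-1}\in\N$ (set $v_0:=0$). First choose a starting index $p_j\ge m_j$ so large that $b_{p_j,\ell_j}\ge v_{j-1}+j$; this keeps the concatenated sequence strictly increasing and makes the junction gap before block $j$ at least $j$. Put $C_j:=\sum_{1\le k<p_j}g_{b_{k,\ell_j},b_{k+1,\ell_j}}$, a finite nonnegative number. By \eqref{delta} applied to $\ell=\ell_j$, the set of $K$ with $\sum_{1\le k\le K}g_{b_{k,\ell_j},b_{k+1,\ell_j}}\ge(\gamma-\tfrac1j)\log b_{K+1,\ell_j}$ is infinite; since $b_{K+1,\ell_j}\to\infty$, I may pick such a $q_j\ge p_j$ with moreover $\log b_{q_j+1,\ell_j}\ge jC_j$. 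The $j$-th block is then declared to consist of the values $b_{p_j,\ell_j}<b_{p_j+1,\ell_j}<\cdots<b_{q_j+1,\ell_j}$, appended after block $j-1$; its last value is $v_j:=b_{q_j+1,\ell_j}$. Concatenating all blocks yields a sequence $(b_k)_{k\ge1}\subset\N$.

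\emph{Verification.} Condition \eqref{AL} is immediate: inside block $j$ every gap is $\ge j$ (all indices involved are $\ge m_j$), the junction gap just before block $j$ is $\ge j$ (for $j\ge2$), and all but finitely many $k$ lie in blocks whose index tends to infinity, so $b_{k+1}-b_k\to\infty$. For \eqref{BL}, let $K^{(j)}$ be the position in the final enumeration with $b_{K^{(j)}+1}=v_j=b_{q_j+1,\ell_j}$. In $\sum_{1\le k\le K^{(j)}}g_{b_k,b_{k+1}}$ the contributions of blocks $1,\dots,j-1$ and of all junctions are $\ge0$, while the contribution of block $j$ equals $\sum_{p_j\le k\le q_j}g_{b_{k,\ell_j},b_{k+1,\ell_j}}=\sum_{1\le k\le q_j}g_{b_{k,\ell_j},b_{k+1,\ell_j}}-C_j\ge(\gamma-\tfrac1j)\log b_{q_j+1,\ell_j}-C_j$. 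Dividing by $\log b_{K^{(j)}+1}=\log b_{q_j+1,\ell_j}$ and using $C_j\le\tfrac1j\log b_{q_j+1,\ell_j}$ gives $\frac1{\log b_{K^{(j)}+1}}\sum_{1\le k\le K^{(j)}}g_{b_k,b_{k+1}}\ge\gamma-\tfrac2j$. Since $K^{(j)}\to\infty$, this yields $\limsup_{K\to\infty}\frac1{\log b_{K+1}}\sum_{1\le k\le K}g_{b_k,b_{k+1}}\ge\gamma$, which is stronger than \eqref{BL}.

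\emph{Main obstacle.} The delicate point is reconciling \eqref{AL} with \eqref{BL}: getting \eqref{AL} forces us to discard the possibly short-gap initial part $k<p_j$ of each $\ell_j$-sequence and to keep pushing $j$ up, whereas \eqref{BL} demands that each surviving block still be long enough to reach an index $q_j$ where the accumulated $g$-mass has grown back to roughly $\gamma\log b_{q_j+1,\ell_j}$, with the discarded mass $C_j$ negligible on the logarithmic scale. The joint requirement ``$q_j$ good \emph{and} $\log b_{q_j+1,\ell_j}\ge jC_j$'' is precisely what makes this possible, and it is affordable because the good indices form an infinite set while $b_{K+1,\ell_j}\to\infty$. (The factor $2$ in the statement is not needed; the construction delivers $\gamma$.)
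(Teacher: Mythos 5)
Your proof is correct and follows essentially the same route as the paper: an inductive concatenation of tails of the sequences $(b_{k,\ell_j})_k$, choosing $\ell_j$ via the $\limsup_\ell\liminf_k$ condition so that internal and junction gaps grow, and using that the discarded initial mass is $o(\log b_{K+1,\ell_j})$ so that \eqref{delta} survives the truncation. The only difference is bookkeeping: your explicit choice $\log b_{q_j+1,\ell_j}\ge jC_j$ yields the limsup $\ge\gamma$, whereas the paper settles for the cruder (and sufficient) bound $\gamma/2$.
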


\begin{proof}
Note that by (\ref{bbb}), we have $b_{k+1,1}-b_{k,1}\ge 1$ for each $k\ge1$, and that without loss of generality we may assume that for each $\ell\ge1$, we have
$\underline{\lim}_{k\to\infty}\, (b_{k+1,\ell}-b_{k,\ell})\ge \ell$.
By (\ref{delta}) for $\ell=1$, we can choose the value $K_1$ so that
\[
\frac{1}{\log b_{K_1+1,1}}
\sum_{k=1}^{K_1}
g_{b_{k,1},b_{k+1,1}}\ge \gamma/2
\]
and take then, for $k=1,\dots,K_1+1$,
$b_k:=b_{k,1}$,
to obtain
\[
\frac{1}{\log b_{K_1+1}}
\sum_{k=1}^{K_1}
g_{b_{k},b_{k+1}}\ge \gamma/2.
\]

We continue the above process by induction. Suppose that for some $\ell\ge 1$ we already have sequences $0=K_0<K_1<\cdots<K_\ell$, $1\le b_1<b_2<\cdots<b_{K_{\ell}+1}$,
satisfying, for each $s=1,\ldots,\ell$,
\begin{equation}\label{s1}
b_{k+1}-b_k\ge s,\quad k=K_{s-1}+1,\dots, K_s,
\end{equation}
and
\begin{equation}\label{s2}
\frac{1}{\log b_{K_s+1}}
\sum_{k=1}^{K_s}
g_{b_k, b_{k+1}}\ge \gamma/2.
\end{equation}
Then, at step $\ell+1$, we first choose  $N_{\ell+1}$ large enough so that
$
b_{K_\ell+2+N_{\ell+1},\ell+1}\ge b_{K_\ell +1}+\ell+1$,
and, for each $k\ge K_\ell +2+N_{\ell+1}$,
$
b_{k+1,\ell+1}-b_{k,\ell+1}\ge \ell+1$.
Then, by (\ref{delta}), we can choose $K_{\ell+1}>K_\ell +2$ large enough so that
\[
\frac{1}{\log b_{K_{\ell+1}+1+N_{\ell+1},\ell+1}}
\sum_{k=K_\ell +2+N_{\ell+1}}^{K_{\ell+1}+N_{\ell+1}}
g_{b_{k,\ell+1},b_{k+1,\ell+1}}\ge \gamma/2,
\]
and we take for $k=K_\ell +2,\dots,K_{\ell+1}+1:$
\[
b_k:=b_{k+N_{\ell+1},\ell+1}.
\]
Then assertions (\ref{s1}) and (\ref{s2}) are valid up to $s=\ell+1$,
and this allows us to construct inductively the sequences $(b_k)_{k\ge 1}$ with the required properties
(\ref{AL}), (\ref{BL}).
\end{proof}

\begin{proof}[Proof of Proposition~\ref{l:prawie}]
Suppose that for some $f\in \mathscr{B}_1$, (\ref{MR}) does not hold. Then, there exist $\gamma>0$
and a sequence $(H_{\ell})$, $H_{\ell}\to\infty$ as $\ell\to\infty$, such that for each $\ell\geq1$, we have
\[
\overline{\lim}_{N\to\infty}\,
\frac{1}{\log N}\sum_{1\le n\le N}\frac{1}{n}
\left\|\frac{1}{H_{\ell}}\sum_{1\le h\le H_{\ell}} S_{n+h}f \right\|_2\ge \gamma.
\]
We write this in the form
\[
\overline{\lim}_{N\to\infty}\,
\frac{1}{\log N}\frac{1}{H_{\ell}}
\sum_{r=0}^{H_{\ell}-1} \sum_{n\le N,\,n\equiv r\;[H_{\ell}]}\frac{1}{n}
\left\|\sum_{1\le h\le H_{\ell}} S_{n+h}f \right\|_2\ge \gamma.
\]
Then there exists $r_{\ell}$, $0\le r_{\ell}<H_{\ell}$, such that
\[
\overline{\lim}_{N\to\infty}\,
\frac{1}{\log N}
\sum_{n\le N,\,n\equiv r_{\ell}\;[H_{\ell}]}\frac{1}{n}
\left\|\sum_{1\le h\le H_{\ell}} S_{n+h}f \right\|_2\ge \gamma.
\]

Using
\[
\frac{1}{n}=\frac{1}{n+h}+\frac{h}{n(n+h)},\quad
\lim_{N\to\infty}\,\frac{1}{\log N}\sum_{n\le N,\,n\equiv {r_\ell}\;[H_{\ell}]}\frac{{H^2_{\ell}}}{n^2}=0,
\]
we obtain
\[
\overline{\lim}_{N\to\infty}\,
\frac{1}{\log N}
\sum_{n\le N,\,n\equiv r_{\ell}\;[H_{\ell}]}
\left\|\sum_{1\le h\le H_{\ell}} \frac{1}{n+h}S_{n+h}f\right\|_2\ge \gamma,
\]
or
\[
\overline{\lim}_{N\to\infty}\,
\frac{1}{\log N}
\sum_{n\le N,\,n\equiv r_{\ell}\;[H_{\ell}]}
\left\|\sum_{n < m\le n+H_{\ell}} \frac{1}{m}S_m f\right\|_2\ge \gamma.
\]
Rewrite this inequality in the form
\[
\overline{\lim}_{N\to\infty}\,
\frac{1}{\log N}
\sum_{k=1}^{[N/H_{\ell}]}
\left\|\sum_{kH_{\ell}+r_{\ell}< m\le (k+1)H_{\ell}+r_{\ell}} \frac{1}{m} S_mf\right\|_2\ge \gamma,
\]
and take, for a fixed $\ell$, the sequence $(b_{k,\ell})_{k\ge 1}$ defined by $b_{k,\ell}:=kH_{\ell}+r_{\ell}+1$.
Setting $K_N:=[N/H_{\ell}]$ for each $N$, we have
$
\log {b_{K_N+1,\ell}}/\log N \tend{N}{\infty} 1$,
hence
\[
\limsup_{K\to\infty}\,
\frac{1}{\log {b_{K+1,\ell}}}
\sum_{k\le K}
\left\|\sum_{b_{k,\ell}\le m < b_{k+1,\ell}} \frac{1}{m} S_mf \right\|_2\ge \gamma.
\]

We can now apply  the Diagonalization Lemma~\ref{Ld} with the sequences
\[
 g_{n,m}:=\left\|   \sum_{n\le j < m} \frac1{j} S_j f   \right\|_2.
\]
We obtain that there exists a sequence $(b_k)_{k\ge 1}$ with
$
0<b_{k+1}-b_k\tend{k}{\infty}\infty,
$
such that
\[
\overline{\lim}_{K\to\infty}\,
\frac{1}{\log {b_{K+1}}}
\sum_{k\le K}
\left\|\sum_{b_{k}\le m< b_{k+1}} \frac{1}{m} S_m f \right\|_2\ge \gamma/2.
\]
Hence~\eqref{eq:premisse} is not satisfied
\end{proof}


\begin{proof}[Proof of Theorem~\ref{t:strongmomoA}]
By Proposition~\ref{l:prawie}, we obtain that if we set
$$
R_f(H):=\limsup_{N\to\infty} \frac1{\log N}\sum_{1\le n\leq N}\frac1n\left\|\frac1H\sum_{1\le h\leq H}S_{n+h}f\right\|_{2},
$$ then
$\lim_{H\to\infty}R_f(H)=0$, so the result follows from  Theorem~\ref{banach} with $\phi(s)=s$.
\end{proof}

\begin{proof}[Proof of Theorem~\ref{t:strongmomo}] We apply Theorem~\ref{t:strongmomoA}
 to  $\mathscr{B}_1=\mathscr{B}_2=C(X)$ and $S_k(f):=\mob(k) f\circ T^k$.
\end{proof}

\section{Proof of Corollary~\ref{c:main0} and related results}

Recall that a point $y$ in a topological dynamical system $(Y,S)$ is \emph{quasi-generic} for some measure $\nu$ if, for some subsequence $(N_k)$ of integers and all $f\in C(Y)$, we have
\[
 \frac{1}{N_k} \sum_{1\le n\le N_k} f(S^ny) \tend{k}{\infty} \int_Y f\,d\nu.
\]
Likewise, we say that $y$ is \emph{logarithmically quasi-generic for $\nu$}  if, for some subsequence $(N_k)$ of integers and for all $f\in C(Y)$, we have
\[
 \frac{1}{\log N_k} \sum_{1\le n\le N_k} \frac{1}{n} f(S^ny) \tend{k}{\infty} \int_Y f\,d\nu.
\]
Observe that any measure for which $y$ is logarithmically quasi-generic is $S$-invariant.

We will use here the following result from~\cite{Fr-Ho} (see the remark after Theorem~1.3 therein).

\begin{Th}[Frantzikinakis and Host]
\label{thm:fh}
Let $(Y,S)$ be a topological dynamical system, and let $y\in Y$. Assume that, for any measure $\nu$ for which $y$ is logarithmically quasi-generic, the system $(Y,\nu,S)$ has zero entropy and countably many ergodic components. Then for any $g\in C(Y)$, we have
\begin{equation}
 \label{eq:fhy}
 \lim_{N\to\infty} \frac{1}{\log N} \sum_{1\le n\le N} \frac{1}{n} g(S^ny) \mob(n) = 0.
\end{equation}
\end{Th}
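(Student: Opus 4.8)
The plan is to reconstruct the argument of Frantzikinakis and Host \cite{Fr-Ho}. Fix $g\in C(Y)$ and put $a_n:=g(S^ny)$, a bounded scalar sequence. First I would reformulate the goal in terms of logarithmic empirical measures on the product of $(Y,S)$ with the M\"obius shift $(\Omega,\sigma)$, $\Omega=\{-1,0,1\}^\N$: along any subsequence $(N_k)$ extract a weak-$*$ limit $\rho$ of $\frac1{\log N_k}\sum_{n\le N_k}\frac1n\delta_{(S^ny,\,\sigma^n\mob)}$; then $\rho$ is $S\times\sigma$-invariant, its $Y$-marginal $\nu$ is a measure for which $y$ is logarithmically quasi-generic, and $\lim_k\frac1{\log N_k}\sum_{n\le N_k}\frac1n a_n\mob(n)=\int g\otimes\pi_0\,d\rho$ with $\pi_0$ the zeroth-coordinate projection on $\Omega$. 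By hypothesis $\nu$ has zero entropy and countably many ergodic components, so it suffices to prove $\int g\otimes\pi_0\,d\rho=0$ for every such limiting joining $\rho$.

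The core is to reduce this to a short-interval (``local'') estimate. Here I would invoke Tao's entropy-decrement argument in logarithmic form \cite{Ta1}: since every $\nu$ for which $y$ is logarithmically quasi-generic has zero entropy, the process $(a_n)$ has vanishing entropy rate along the relevant subsequences, so the mutual information between $\mob$ and the dilations $n\mapsto pn$ is negligible on average over primes $p$ in a slowly growing window; combined with the dilation-invariance of logarithmic averages and the Matom\"aki--Radziwi\l\l\ theorem on averages of $\mob$ in short intervals, this turns the target into the assertion that
\[
\lim_{H\to\infty}\ \limsup_{N\to\infty}\ \frac1{\log N}\sum_{1\le n\le N}\frac1n\Bigl|\frac1H\sum_{1\le h\le H}\mob(n+h)\,a_{n+h}\Bigr|=0 .
\]
Once this is known, a telescoping and triangle-inequality computation of exactly the kind carried out in the proof of Theorem~\ref{banach} recovers $\frac1{\log N}\sum_{n\le N}\frac1n\mob(n)a_n\to0$, which is the claim.

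For the displayed short-interval estimate I would bring in the structure theory of measure-preserving systems. After the Matom\"aki--Radziwi\l\l\ reduction the inner average is controlled by the Gowers--Host--Kra uniformity seminorms $\|g\|_{U^s(\nu_i)}$ taken along the (countably many) ergodic components $\nu_i$ of $\nu$ --- and countability is precisely what lets one pass from componentwise bounds to a bound for $\nu$ itself by a diagonal argument over a full-measure set. By the Host--Kra--Ziegler inverse theory these seminorm contributions are carried by the nilfactors of $(Y,\nu_i,S)$, so modulo a small error the inner average becomes a correlation of $\mob$ with a nilsequence, and the logarithmically averaged form of the Green--Tao orthogonality of $\mob$ to nilsequences makes it vanish in logarithmic density. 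Assembling the pieces and letting $H\to\infty$ yields the displayed limit, hence the theorem.

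The step I expect to be the main obstacle is the entropy-decrement reduction to short intervals: the hypothesis is phrased in terms of the measure-theoretic entropy and ergodic decomposition of the logarithmically-quasi-generic measures rather than the topological entropy of the orbit closure of $y$, so one must argue carefully that this still supplies the low-complexity input that Tao's argument consumes, and then splice it with the logarithmic averaging robustly enough that the outer $H\to\infty$ limit may be taken. A second, more technical obstacle is that the higher-order ($U^s$, $s\ge2$) local uniformity of $\mob$ in short intervals one would naively want to quote is itself deep; here one must follow Frantzikinakis and Host in arranging matters so that only what is genuinely available --- effectively $U^1$ (Matom\"aki--Radziwi\l\l) together with orthogonality to nilsequences routed through the structure theorem --- is actually used.
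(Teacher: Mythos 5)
This statement is not proved in the paper at all: it is quoted verbatim from Frantzikinakis and Host \cite{Fr-Ho} (see the remark after Theorem~1.3 there), and the note uses it as a black box. So the only ``proof'' the paper supplies is the citation, whereas you set out to reconstruct an Annals-length argument. Your opening reduction is fine and standard: passing to weak-$*$ limits of the logarithmic empirical measures on $Y\times\{-1,0,1\}^{\N}$, noting that the $Y$-marginal of any limit joining $\rho$ is a measure for which $y$ is logarithmically quasi-generic, and reducing \eqref{eq:fhy} to $\int g\otimes\pi_0\,d\rho=0$ for every such $\rho$. Everything after that, however, is a roadmap rather than a proof, and it does not match how \cite{Fr-Ho} actually proceed.

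Two concrete gaps. First, the pivotal reduction to the two-scale estimate
\begin{equation*}
\lim_{H\to\infty}\ \limsup_{N\to\infty}\ \frac1{\log N}\sum_{1\le n\le N}\frac1n\Bigl|\frac1H\sum_{1\le h\le H}\mob(n+h)\,a_{n+h}\Bigr|=0
\end{equation*}
is asserted, not derived, and the tools you invoke do not produce it: Matom\"aki--Radziwi\l\l{} controls $\frac1H\sum_{h\le H}\mob(n+h)$ for most $n$, not the weighted sums with $a_{n+h}$ attached, and Tao's entropy-decrement argument \cite{Ta1} exploits the multiplicativity of $\mob$ and the entropy of its own sign patterns --- the ``entropy rate of $(a_n)$'' plays no role there. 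In \cite{Fr-Ho} the zero-entropy and countable-ergodic-component hypotheses enter through a structural and disjointness analysis of the limit joinings (after Tao's identity converts the logarithmic correlation into averages over primes of dilated correlations), not through a mutual-information bound on the weight. Note also that the displayed estimate is essentially the local strong-MOMO-type statement which this very note relates to the conclusion via Proposition~\ref{l:prawie} and Theorem~\ref{banach}, so reducing the theorem to it without an independent proof is close to circular. Second, the closing step --- componentwise Gowers--Host--Kra seminorms, the Host--Kra inverse theory, Green--Tao orthogonality to nilsequences, and a ``diagonal argument'' to exploit countability --- is a sketch over precisely the hard part of \cite{Fr-Ho}: you acknowledge that the short-interval $U^s$ control of $\mob$ this route would naively need is unavailable, and you do not say how the logarithmically averaged correlation with nilsequences is obtained for this specific weight, nor how the countably many ergodic components are actually recombined. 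As it stands the proposal is a survey of plausible tools; for the purposes of this note the correct and intended move is simply to cite \cite{Fr-Ho}.
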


\begin{proof}[Proof of Corollary~\ref{c:main0}]

Let us consider a dynamical system $(X,T)$ with zero topological entropy, and such that $M^e(X,T)$ is countable. In view of Theorem~\ref{t:strongmomo}, all we need to prove is that $(X,T)$ satisfies the logarithmic strong MOMO property. That is, we fix an increasing sequence $(b_k)\subset\N$ with $b_{k+1}-b_k\to\infty$ (and we assume without loss of generality that $b_1=1$), a sequence $(x_k)\subset X$ and $f\in C(X)$,
and we have to show the following convergence
\beq\label{momo1}
\frac1{\log {b_{K+1}}}\sum_{k\leq K} \left|
\sum_{b_k\leq n<b_{k+1}}\frac1n f(T^nx_k)\mob(n)\right|\tend{K}{\infty} 0.\eeq
According to \cite[Lemma 18]{Ab-Ku-Le-Ru}, it is sufficient to show that
\beq\label{momo2}
\frac1{\log {b_{K+1}}}\sum_{k\leq K} e_k
\sum_{b_k\leq n<b_{k+1}}\frac1n f(T^nx_k)\mob(n)\tend{K}{\infty} 0,
\eeq
where
$e_k\in \Sigma_3:=\{e^{2\pi i j/3}:\:j=0,1,2\}$
is chosen so that the product
\[ e_k
\sum_{b_k\leq n<b_{k+1}}\frac1n f(T^nx_k)\mob(n) \]
belongs to the closed cone $\{0\}\cup\{z\in\C:\:{\rm arg}(z)\in [-\pi/3,\pi/3]\}$.

In order to show~\eqref{momo2}, we consider the space $Y:=(X\times \Sigma_3)^{\N}$ with the shift $S$, and in this system the point
$y=(y_n)_{n\in\N}$ defined by
\[
 y_n := (T^n x_k, e_k) \text{ if } b_k\le n<b_{k+1} \quad (k\ge1).
\]
Let $\nu$ be a measure for which $y$ is logarithmically quasi-generic.
The same argument as in~\cite{Ab-Ku-Le-Ru} (see the proof of  (P2 $\Rightarrow$ P3)) shows that $\nu$ must be concentrated on the set of sequences of the form
\[ \bigl( (x,a), (Tx,a), (T^2x,a),\ldots \bigr) \quad (x\in X, a\in \Sigma_3).\]
Now, let us consider an ergodic component $\rho$ of $\nu$. The marginal of $\rho$ on the first coordinate $x$ must be an ergodic $T$-invariant measure on $X$. By assumption, there are only countably many of them, and all of them give rise to  zero-entropy systems. The marginal of $\rho$ on the second coordinate $a$ is one of the three Dirac measures $\delta_{1}$, $\delta_{e^{i2\pi/3}}$, $\delta_{e^{i4\pi/3}}$. Moreover these two marginals must be independent by the disjointness of ergodic systems with the identity, thus, these two marginals completely determine $\rho$. Hence, we see that there can be only countably many possible ergodic components of $\nu$, and all of them have zero entropy. Thus $y$ satisfies the assumptions of Theorem~\ref{thm:fh}, and we have~\eqref{eq:fhy} for each $g\in C(Y)$. In particular, if we take the continuous function $g$ defined by $g(z):= a_0f(z_0)$ for each $z =((z_0,a_0),(z_1,a_1),\ldots)\in Y$, we obtain~\eqref{momo2}.
\end{proof}

\begin{Remark}\label{r:commonmomo}  We can characterize uniform convergence for M\"obius orthogonality in terms of a MOMO type convergence. Indeed:

\noindent {\em The uniform convergence in M\"obius orthogonality~\eqref{sarCes} holds if and only if for all $(b_k)$ satisfying  $b_k/b_{k+1}\to 0$,
we have
\[
\frac1{b_{k+1}}\left\|\sum_{b_k\leq n<b_{k+1}} \mob(n)f\circ T^n\right\|_{C(X)}\tend{k}{\infty} 0.
\]}

To see this equivalence, it is sufficient to note that for each $x\in X$,

\begin{multline*}
\frac1{b_{k+1}} \sum_{b_k\leq n< b_{k+1}}f(T^nx)\mob(n)=
\frac1{b_{k+1}} \sum_{1\leq n< b_{k+1}}f(T^nx)\mob(n)\\
- \frac{b_k}{b_{k+1}}\frac1{b_k} \sum_{1\leq n< b_k}f(T^nx)\mob(n).
\end{multline*}

%
%



Remark also that the same arguments work for the logarithmic averages (replacing $b_k/b_{k+1}\to 0$ with $\log b_k/\log b_{k+1}\to0$).
\end{Remark}

\section{Miscellanea}
\subsection{Ergodic measures} \label{s:ergmes}
In this section, we show that Tao's approach persists, if we consider the main observation from \cite{Go-Kw-Le}.

Let $(X,T)$ be a dynamical system. Given $x\in X$ and $n\in\mathbb{N}$, we write $\delta_{T^n(x)}$ for the Dirac measure concentrated at the point $T^n(x)$.
Let
\[
\mathcal{E}(x,N):=\frac{1}{N}\sum_{1\le n\le N} \delta_{T^{n}(x)}, \quad
\mathcal{E}^{\rm log}(x,N):=\frac1{\log N}\sum_{1\le n\le N}\frac1n \delta_{T^{n}(x)}.
\]
We consider here the convergence of these empirical measures in the weak* topology. Note that any accumulation point of the above sequences is always a $T$-invariant probability measure on $(X,T)$.

\begin{Th}\label{Erg1}
Suppose that for some $x\in X$ and some subsequence $(N_s)_{s\ge 1}$ of natural numbers, we have
\begin{equation}\label{Co1}
\lim_{s\to\infty}\,\mathcal{E}^{\rm log}(x,N_s)=\kappa,\quad \text{where $\kappa$ is \emph{ergodic}.}
\end{equation}
Then there exists a set $\mathcal{N}$ of natural
numbers with the property
\begin{equation}\label{Prop11}
\lim_{s\to\infty}\,\frac{1}{\log {N_s}}\sum_{{\mathcal N}\ni N\le N_s}\,\frac{1}{N}=1,
\end{equation}
such that
\begin{equation}\label{MMM}
\lim_{N\to\infty,\,N\in \mathcal{N}}\,\mathcal{E}(x,N)=\kappa.
\end{equation}
\end{Th}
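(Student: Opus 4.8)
The plan is to deduce Theorem~\ref{Erg1} as a direct application of Theorem~\ref{banach} with the choice $\phi(s)=s$, taking $\mathscr{B}_1$ to be a suitable separable space on which $\kappa$ acts as a linear functional and $\mathscr{B}_2=\C$ (or $\R$). First I would fix a countable set $(f_j)_{j\ge1}\subset C(X)$ that is dense in $C(X)$ for the uniform norm (recall $C(X)$ is separable since $X$ is metrizable), and set $\mathscr{B}_1:=C(X)$, $\mathscr{B}_2:=\C$. For each $n\ge1$ I would define the bounded linear operator $S_n\colon C(X)\to\C$ by $S_n f:=f(T^n x)-\int_X f\,d\kappa$; then $\|S_n f\|_2=|f(T^nx)-\int f\,d\kappa|\le 2\|f\|_\infty$, so the uniform bound~\eqref{bounded} holds with $M=2$. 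With this choice, $(1/N)\sum_{n\le N}S_nf=\int f\,d\mathcal{E}(x,N)-\int f\,d\kappa$, and the logarithmic average $(1/\log N)\sum_{n\le N}(1/n)S_nf=\int f\,d\mathcal{E}^{\rm log}(x,N)-\int f\,d\kappa$. Thus~\eqref{MMM} for all $f\in C(X)$ is exactly the conclusion~\eqref{lio1A} (tested against a dense family and extended by the uniform bound, as in the last paragraph of the proof of Theorem~\ref{banach}), and~\eqref{Prop11} is exactly~\eqref{property}.

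The only hypothesis of Theorem~\ref{banach} that needs genuine verification is the smallness~\eqref{zero} of the quantity $R_f(H)$ of~\eqref{hh} along the given subsequence $(N_s)$: I must show, for each fixed $f\in C(X)$,
\[
\lim_{H\to\infty}\ \limsup_{s\to\infty}\ \frac{1}{\log N_s}\sum_{1\le n\le N_s}\frac1n\,\Bigl|\frac1H\sum_{1\le h\le H}S_{n+h}f\Bigr|=0.
\]
Writing $\tilde f:=f-\int f\,d\kappa$, the inner average is $\frac1H\sum_{h\le H}\tilde f(T^{n+h}x)$. The key input is that $\kappa$ is \emph{ergodic}: by von Neumann's (or Birkhoff's) ergodic theorem applied to the $L^2(\kappa)$ function $\tilde f$, the Cesàro averages $A_H\tilde f:=\frac1H\sum_{h\le H}\tilde f\circ T^h$ converge in $L^2(\kappa)$ to $\int\tilde f\,d\kappa=0$ as $H\to\infty$. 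For a fixed $H$ the function $g_H:=|A_H\tilde f|$ is continuous on $X$, bounded by $2\|f\|_\infty$, and $\int g_H\,d\kappa=\|A_H\tilde f\|_{L^1(\kappa)}\le\|A_H\tilde f\|_{L^2(\kappa)}\to0$. Since, by hypothesis~\eqref{Co1}, $\mathcal{E}^{\rm log}(x,N_s)\to\kappa$ weak*, and $g_H\in C(X)$, we get $\limsup_{s\to\infty}\frac{1}{\log N_s}\sum_{n\le N_s}\frac1n g_H(T^nx)=\int g_H\,d\kappa$ (a full limit, in fact). But $R_f(H)=\limsup_s\frac1{\log N_s}\sum_{n\le N_s}\frac1n g_H(T^{n+1}x)\cdot$-type expression; more precisely the summand is $g_H$ evaluated not at $T^nx$ but at $T^{n}x$ shifted — one checks that $\frac1H\sum_{h\le H}S_{n+h}f=(A_H\tilde f)(T^nx)$, so the summand is exactly $g_H(T^nx)$ and $R_f(H)=\int g_H\,d\kappa$. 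Hence $\lim_{H\to\infty}R_f(H)=\lim_{H\to\infty}\int g_H\,d\kappa=0$, which is~\eqref{zero}.

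With~\eqref{bounded} and~\eqref{zero} in hand, Theorem~\ref{banach} furnishes a set $\mathcal{N}$ satisfying~\eqref{property}, i.e.~\eqref{Prop11}, along which $\frac1N\|\sum_{n\le N}S_n f\|_2\to0$ for every $f\in C(X)$; unwinding the definition of $S_n$ this says $\int f\,d\mathcal{E}(x,N)\to\int f\,d\kappa$ for every $f$ in a dense subset and then, by the $M$-Lipschitz dependence, for every $f\in C(X)$, which is precisely $\mathcal{E}(x,N)\to\kappa$ weak* along $\mathcal{N}$, i.e.~\eqref{MMM}. The main obstacle — and really the only nontrivial point — is the identification $R_f(H)=\int |A_H\tilde f|\,d\kappa$ together with the ergodicity-driven fact that this tends to $0$; everything else is bookkeeping. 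A minor care point is that the weak* convergence in~\eqref{Co1} is stated along a \emph{subsequence} $(N_s)$, so throughout one works with $\limsup_{s\to\infty}$ as in the statement of Theorem~\ref{banach} rather than with ordinary limits, which causes no difficulty since Theorem~\ref{banach} is formulated for exactly this situation (and delivers the matching subsequential conclusion~\eqref{lio1A} rather than the full-density~\eqref{lio1AA}).
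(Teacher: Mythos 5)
Your proof is correct and follows essentially the same route as the paper's: the same operators $S_nf:=f(T^nx)-\int_X f\,d\kappa$ from $C(X)$ to $\C$, the same verification that $R_f(H)\to0$ by observing that the summand is a fixed continuous function ($|A_H\tilde f|$ in your case, its square in the paper's) sampled along the orbit, so the weak* convergence \eqref{Co1} identifies $R_f(H)$ with an integral against $\kappa$ that tends to $0$ by the mean ergodic theorem and ergodicity, followed by an application of Theorem~\ref{banach}. The only (immaterial) difference is your choice $\phi(s)=s$ with the bound $\|A_H\tilde f\|_{L^1(\kappa)}\le\|A_H\tilde f\|_{L^2(\kappa)}$, where the paper works with the second moment $\phi(s)=s^2$ directly.
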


\begin{proof}
The condition
(\ref{Co1}) means that for any $f\in C(X)$ we have
\begin{equation}\label{CC2}
\lim_{s\to\infty}\,\frac1{\log {N_s}}\sum_{n=1 }^{N_s}\frac {1}{n} f(T^n u)=\int_X f\,d\kappa.
\end{equation}

Let $f\in C(X)$ be fixed, and set $Sf:=\int_X f\,d\kappa$.
For $H\in \N$, we consider the limiting of the second moment
\[
R_f(H):=\lim_{s\to\infty}\, \frac{1}{\log {N_s}}\sum_{1\le n \le N_s} \frac{1}{n}\left|
\frac{1}{H}\sum_{m=1}^H f(T^{n+m}x)-Sf\right|^2.
\]
The limit does exist by~\eqref{Co1}, as the internal is given by a continuous function sum sampled at $x$.
So, by condition (\ref{CC2}), we have
\begin{equation}\label{RR1}
R_f(H)=\int_X
\left|\frac{1}{H}\sum_{m=1}^H f(T^{m} x)-Sf\right|^2\,d\kappa(x).
\end{equation}
Hence, directly by the von Neumann ergodic theorem,
and using the ergodicity of $\kappa$, we obtain
$
\lim_{H\to\infty}\,R_f(H)=0$.
Take now $\mathscr{B}_1=C(X)$, $\mathscr{B}_2=\C$ with the sequence of functionals
$
S_kf:= f(T^{k}x)-Sf$, $k\in\N$,
and we obtain statement (\ref{MMM}) by Theorem \ref{banach}.
\end{proof}

\subsection{Davenport-Erd\"os theorem}
Davenport-Erd\"os theorem \cite{Da-Er} is the fact that, given $\mathscr{B}\subset\N$, the $\mathscr{B}$-free set $\mathcal{F}_{\mathscr{B}}$, i.e.\ the set of those numbers that have no divisor in $\mathscr{B}$,  has logarithmic density and, moreover, $\delta(\mathcal{F}_{\mathscr{B}})=\ov{d}(\mathcal{F}_{\mathscr{B}})$. In fact, see \cite{Dy-Ka-Ku-Le}, the point $\jeden_{\mathcal{F}_{\mathscr{B}}}$ is {\bf logarithmically generic} for the relevant Mirsky measure which is ergodic.
Hence, by Theorem~\ref{Erg1}, we obtain that the upper asymptotic density
$
\ov{d}(\mathcal{F}_{\mathscr{B}})$
is obtained along a subsequence of logarithmic density~1.
We can however obtain this result in an elementary way.
Indeed, for a subset $A\subset\N$ and $N\in\N$, set
$\cav{N}{A} \egdef \dfrac{1}{N}\sum_{1\le n\le N} \raz_A(n)$ and
$\lav{N}{A} \egdef \dfrac{1}{\log N}\sum_{1\le n\le N} \frac1n\raz_A(n)$.
More generally, given
 $a=(a_n)_{n\in\NN}$ a sequence of real numbers, set
 $
   \cav{N}{a} \egdef \dfrac{1}{N}\sum_{1\le n\le N} a_n$ and
   $\lav{N}{a} \egdef \dfrac{1}{\log N}\sum_{1\le n\le N} \frac{a_n}{n}$.

 \begin{Prop}
  Let $a=(a_n)_{n\in\NN}$ be a bounded sequence of real numbers, and let $\ell\egdef\limsup_{N\to\infty}\cav{N}{a}$.
  Assume that $\lim_{N\to\infty}\lav{N}{a}=\ell$. Then there exists $B\subset\NN$ with $\lim_{N\to\infty} \lav{N}{B}=1$ such that
   $\lim_{B\ni N\to\infty} \cav{N}{a}=\ell$.
 \end{Prop}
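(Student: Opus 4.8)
The plan is to reduce this to the already-established machinery of Lemma~\ref{Cor1} and Lemma~\ref{Sets}, exactly in the spirit of the proof of Theorem~\ref{banach}, but now working with the scalar sequence $(a_n)$ directly (so $\phi$ can be taken linear and there is no averaging over shifts to worry about). First I would reduce to the case $\ell=0$ by replacing $a_n$ with $a_n-\ell$: the hypothesis $\lim_{N\to\infty}\lav{N}{a}=\ell$ becomes $\lim_{N\to\infty}\lav{N}{(a-\ell)}=0$, and $\limsup_{N\to\infty}\cav{N}{(a-\ell)}=0$; the conclusion becomes $\lim_{B\ni N\to\infty}\cav{N}{(a-\ell)}=0$. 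One subtlety: after this shift the sequence need not be nonnegative, so I would in fact apply the argument to $F(n):=|\,\cav{n}{(a-\ell)}\,|$ or, more precisely, work with $\cav{N}{a}$ itself and control it from both sides — but since $\limsup\cav{N}{a}=\ell$ already gives the upper bound asymptotically, the real content is producing a large set on which $\cav{N}{a}$ does not dip below $\ell$, and this is what the lower bound on $\lav{N}{a}$ buys us.

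The key step is the following quantitative statement: for every $\eps>0$ there is a set $T_\eps\subset\NN$ with $\lav{N}{T_\eps}\to 1$ (along all of $\NN$) such that $\cav{N}{a}\ge \ell-\eps$ for every $N\in T_\eps$. To get this, fix $\eps$, and note that by $\lim_{N}\lav{N}{a}=\ell$ together with the partial-summation identity~\eqref{sparts} — which expresses $\lav{N}{a}$ as a convex average of the $\cav{n}{a}$, $n\le N$ — the ``mass'' of indices $n\le N$ (weighted by $1/n$) on which $\cav{n}{a}<\ell-\eps$ must be $o(\log N)$; otherwise those indices, combined with the uniform upper bound $\cav{n}{a}\le \ell+o(1)$ coming from $\limsup\cav{N}{a}=\ell$ and boundedness of $(a_n)$, would force $\liminf_N\lav{N}{a}<\ell$. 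Concretely, this is precisely the content of Lemma~\ref{Cor1} applied with $F(n):=(\ell+C)-a_n\ge 0$ (where $C$ bounds $|a_n|$), whose Cesàro averages are $(\ell+C)-\cav{n}{a}$; feeding in $\gamma$ a suitable small constant governed by $\eps$ yields a set on which $(\ell+C)-\cav{n}{a}$ is small, i.e.\ $\cav{n}{a}$ is close to $\ell$ from below, of logarithmic lower density $\ge 1-\sqrt\gamma$. Taking $\gamma\to 0$ along $\eps\to0$ gives sets $T_\eps$ of logarithmic density tending to $1$; intersecting with an initial-segment truncation and invoking Lemma~\ref{Sets} (with $M_k:=T_{1/k}$) produces a single set $B$ with $\lav{N}{B}\to1$ such that every sufficiently large $N\in B$ lies in $T_{1/k}$ for $k$ as large as we like, whence $\cav{N}{a}\to\ell$ along $B$.

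The main obstacle — and it is a mild one — is bookkeeping around the fact that $\cav{N}{a}$ approaches $\ell$ from below on the good set but is only known to be $\le\ell+o(1)$ in general (not $\le\ell$): one must be slightly careful that the ``bad'' set where $\cav{N}{a}>\ell+\eps$ is genuinely negligible in logarithmic density, which again follows from $\limsup_N\cav{N}{a}=\ell$ since that says $\cav{N}{a}>\ell+\eps$ holds only for finitely many $N$ (if the $\limsup$ is attained in the usual sense) or at worst on a set of upper asymptotic density zero, hence logarithmic density zero; either way it is absorbed into the exceptional set handled by Lemma~\ref{Sets}. Apart from that, every ingredient — partial summation~\eqref{sparts}, Markov's inequality inside Lemma~\ref{S}/Lemma~\ref{Cor1}, and the diagonalization of Lemma~\ref{Sets} — is already in place, so the proof is essentially a direct application.
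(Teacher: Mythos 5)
Your overall architecture coincides with the paper's: for each $\eps>0$ show that $B_\eps:=\{N\in\NN:\ \cav{N}{a}>\ell-\eps\}$ has logarithmic density one, then glue the sets $B_{1/k}$ along a suitable increasing sequence of thresholds (the paper does this gluing by hand; your appeal to Lemma~\ref{Sets} with $M_k:=B_{1/k}$ is an equivalent way), and finally use $\limsup_{N\to\infty}\cav{N}{a}=\ell$ to control the upper side. Moreover, the informal justification you give for the key step --- via \eqref{sparts} the logarithmic average of the Ces\`aro averages $\cav{n}{a}$ tends to $\ell$, while $\cav{n}{a}\le\ell+o(1)$, so the $1/n$-weighted mass of those $n\le N$ with $\cav{n}{a}<\ell-\eps$ must be $o(\log N)$ --- is exactly the two-term estimate carried out in the paper's Step~1 (there written for $b_n:=\cav{n-1}{a}$, splitting $\lav{N}{b}$ over $B_\eps+1$ and its complement).

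The genuine gap is in the ``concrete'' instantiation of that step. Lemma~\ref{Cor1} applied to $F(n):=(\ell+C)-a_n$ cannot be fed a small $\gamma$: its hypothesis is $\limsup_{N}\lav{N}{F}\le\gamma$, and here $\lav{N}{F}\to(\ell+C)-\ell=C$, a fixed constant determined by the bound on $(a_n)$, so no $\gamma<C$ is admissible (and if $C\ge1$ the lemma does not apply at all). Even taking $\gamma$ slightly above $C$ (when $C<1$), the conclusion would be $\cav{N}{F}=(\ell+C)-\cav{N}{a}\le\sqrt{\gamma}$ on a set of lower logarithmic density $1-\sqrt{\gamma}$, i.e.\ $\cav{N}{a}\ge\ell+C-\sqrt{\gamma}$: this places $\cav{N}{a}$ near $\ell+C$, not near $\ell$, contrary to what you assert, and the density bound $1-\sqrt{\gamma}\approx1-\sqrt{C}$ does not tend to $1$. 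Adding the constant $C$ to force nonnegativity destroys precisely the smallness that the lemma needs; the structural reason is that the one-sided bound $\le\ell+o(1)$ holds only for the Ces\`aro averages $\cav{n}{a}$, not for the individual terms $a_n$. The repair is to run the Markov-type argument at the level of the averages: for instance set $G(n):=\max\bigl(0,\ \ell-\cav{n}{a}\bigr)$, observe that $\lav{N}{G}\to0$ (by \eqref{sparts} the logarithmic average of $n\mapsto\cav{n}{a}$ tends to $\ell$, and $\max(0,\cav{n}{a}-\ell)\to0$ since $\limsup_{N}\cav{N}{a}=\ell$), and apply Lemma~\ref{S} with $\gamma\downarrow0$ to get that $\{n:\ G(n)<\eps\}=B_\eps$ has logarithmic density one --- or simply perform the direct splitting as in the paper's Step~1. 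With that replacement your argument goes through; as written, the key quantitative step fails.
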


 \begin{proof}

  \textbf{Step 1} For any $\varepsilon>0$, set $B_\varepsilon\egdef\{N\in\NN: \cav{N}{a} > \ell-\varepsilon\}$. Then $\lim_{N\to\infty}\lav{N}{B_\varepsilon}=1$.
  Indeed, let us introduce the sequence $b=(b_n)_{n\in\NN}$ defined by $b_1\egdef 0$, and for each $n\ge2$, $b_n\egdef \cav{n-1}{a}$.
  The Abel summation formula yields
  \[
   \lav{N}{a} = \dfrac{\cav{N}{a}}{\log N} + \lav{N}{b}.
  \]
  By assumption, we thus have $\lim_{N\to\infty} \lav{N}{b}=\ell$.
  \newcommand{\wb}{\widetilde{B_\varepsilon}}
  \newcommand{\wa}{\widetilde{A_\varepsilon}}
  Let
  \[
   \wb \egdef \{N\in\NN: b_N>\ell-\varepsilon\} (= B_\varepsilon +1)\text{
  and }
   \wa\egdef\NN\setminus \wb =  \{N\in\NN: b_N \le \ell-\varepsilon\}.
  \]
  In the computation of $\lav{N}{b}$, the contribution of $\wa$ is bounded above by
  \[ (\ell-\varepsilon) \lav{N}{\wa}=(\ell-\varepsilon) \left(1-\lav{N}{\wb}\right).\]
  On the other hand, using the fact that $\limsup_{N\to\infty} b_N = \ell$, the contribution of
  $\wb$ to $\lav{N}{b}$ is bounded above by $\ell \lav{N}{\wb} + o(1)$.
 Therefore,  we have for each $N\in\NN$
  \begin{align*}
   \lav{N}{b} &\le (\ell-\varepsilon) \left(1-\lav{N}{\wb}\right) + \ell \lav{N}{\wb} + o(1)\\
    &= \varepsilon \lav{N}{\wb} + \ell -\varepsilon +o(1).
  \end{align*}
But we know that $\lim_{N\to\infty} \lav{N}{b} = \ell$, and it follows that $\lim_{N\to\infty} \lav{N}{\wb}=1$.
Finally, since $\wb=B_\varepsilon+1$, we also get $\lim_{N\to\infty} \lav{N}{B_\varepsilon}=1$.

  \textbf{Step 2} We construct the announced set $B$ as follows. First we fix a decreasing sequence $(\varepsilon_k)$ of positive numbers
  going to 0 as $k\to\infty$. By Step~1, we know that $\lav{N}{B_{\varepsilon_k}}\tend{N}{\infty}1$.
  Then we construct a strictly increasing sequence $(N_k)$ of integers such that
  $\forall k$, $\forall N\ge N_k$, $\lav{N}{B_{\varepsilon_k}} > 1-\varepsilon_k$.
  Finally we define $B$ by $B\cap \{1,\ldots,N_1-1\}\egdef \{1,\ldots,N_1-1\}$, and for each $k\ge1$,
  $B\cap\{N_k,\ldots,N_{k+1}-1\}\egdef B_{\varepsilon_k}\cap\{N_k,\ldots,N_{k+1}-1\}$.
\end{proof}

\subsection{Deriving a density version of the PNT from the logarithmic Chowla conjecture of order~2}\label{s:LogPNT}

\begin{Lemma}\label{l:n2}
Assume that $A\subset\N$ with $\delta(A)=1$. For each $m\in \N$ set
\beq\label{Setsss}
A_m:=\{n\in \N:\:[n/m]\in A\}.
\eeq
Then $\delta(A_m)=1$ for all $m\in \N$.
\end{Lemma}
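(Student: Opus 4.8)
The plan is to argue via complements and logarithmic densities. Recall that $\delta(A)=1$ is equivalent to $\delta(\N\setminus A)=0$, that is,
\[
\frac{1}{\log M}\sum_{j\le M,\,j\notin A}\frac1j\tend{M}{\infty}0,
\]
and that $\frac1{\log N}\sum_{n\le N}\frac1n\to 1$. Hence it suffices to prove $\delta(\N\setminus A_m)=0$; then $\delta(A_m)$ automatically exists and equals $1$ by subtraction.

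To estimate $\sum_{n\le N,\,n\notin A_m}\frac1n$, I would partition the integers $n\ge m$ into the blocks $B_j:=\{n:\,[n/m]=j\}=\{jm,jm+1,\dots,jm+m-1\}$ for $j\ge1$, noting that $n\notin A_m$ precisely when $j=[n/m]\notin A$, while the finitely many $n<m$ contribute at most the bounded quantity $\sum_{1\le n<m}\frac1n$, which is negligible after division by $\log N$. On each block one has the crude estimate $\sum_{n\in B_j}\frac1n\le m\cdot\frac1{jm}=\frac1j$, so
\[
\sum_{n\le N,\,n\notin A_m}\frac1n\ \le\ \sum_{1\le n<m}\frac1n\ +\ \sum_{\substack{1\le j\le N/m\\ j\notin A}}\frac1j.
\]

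Applying the hypothesis with $M=\lfloor N/m\rfloor$, the last sum is $o(\log(N/m))=o(\log N)$, since $\log(N/m)=\log N-\log m\sim\log N$. Dividing by $\log N$ and letting $N\to\infty$ yields $\frac1{\log N}\sum_{n\le N,\,n\notin A_m}\frac1n\to 0$, i.e.\ $\delta(\N\setminus A_m)=0$, hence $\delta(A_m)=1$. The argument is entirely elementary; the only points needing (minor) care are the separate treatment of the initial block $n<m$ and the harmless replacement of $\log(N/m)$ by $\log N$, so there is no genuine obstacle here.
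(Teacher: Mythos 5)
Your proof is correct, and it takes a route that is genuinely different in mechanism from the paper's, even though both rest on the same block decomposition of $\N$ according to $[n/m]=j$. You pass to the complement: since $\sum_{n\le N}\frac1n\sim\log N$, the hypothesis $\delta(A)=1$ is equivalent to $\frac1{\log M}\sum_{j\le M,\,j\notin A}\frac1j\to 0$, and then the crude one-sided bound $\sum_{n\in B_j}\frac1n\le m\cdot\frac1{jm}=\frac1j$ on each block $B_j=\{jm,\dots,jm+m-1\}$ with $j\notin A$ immediately gives $\sum_{n\le N,\,n\notin A_m}\frac1n\le O_m(1)+o(\log N)$, whence $\delta(\N\setminus A_m)=0$ and, by subtraction, $\delta(A_m)=1$. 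The paper instead works directly with $A$: it bounds from below the contribution of the block attached to each $n\in A$ via the integral comparison $\sum_{j=0}^{m-1}\frac1{mn+j}\ge\log(1+1/n)\ge\frac1n-\frac1{2n^2}$, and then carries explicit $\epsilon$-bookkeeping for the error terms (the constant $\sum_k 1/k^2=\pi^2/6$ and the replacement of $\log N$ by $\log(mN)$) to reach $\liminf\ge(1-\epsilon)^2$. Your complement version buys simplicity: only an upper bound per block is needed, so the second-order correction and the $\epsilon$-juggling disappear, while the two minor points you single out (the initial segment $n<m$, contributing a bounded amount, and $\log(N/m)\sim\log N$ for fixed $m$) are indeed the only ones requiring care and you handle them correctly. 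The paper's direct lower-bound computation is slightly more quantitative but proves nothing more for the purposes of the lemma.
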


\begin{proof} Let $m\ge 2$ be fixed.
Note that if $n\in A$, then
\beq\label{S0}
mn+j\in A_m,\quad j=0,1,\dots,m-1,
\eeq
and
\beq\label{InS}
\sum_{j=0}^{m-1}\frac{1}{mn+j}\ge \int_{mn}^{m(n+1)}\frac{dt}{t}=\log(1+1/n)\ge \frac{1}{n}-\frac{1}{2n^2}.
\eeq
Given  $\epsilon\in (0,1)$, we have
$
\frac{1}{\log N}\sum_{n\in A,\;n\le N}\frac{1}{n}\ge 1-\epsilon/2\text{ for }N\ge N_\epsilon$.
Setting $c:=\sum_{k=1}^\infty\frac{1}{k^2}=\frac{\pi^2}{6}$, we may assume that
\[
\frac{c}{2\log {N_\epsilon}}\le \epsilon/2\quad \text{ and }\quad\frac{\log N}{\log {mN}}\ge 1-\epsilon\quad \text{ for all }
 N\ge N_\epsilon.
\]
It follows that for any $N\ge N_\epsilon$, by~(\ref{InS}) and~\eqref{S0}, we have
\begin{align*}
1-\epsilon/2 &\le \frac{1}{\log N}\sum_{n\in A,\;n\le N}\frac{1}{n}\\
             &\le\frac{1}{\log N}\sum_{n\in A,\;n\le N}\left(\sum_{j=0}^{m-1}\frac{1}{mn+j}\right)+
\frac{1}{2\log N}\sum_{n\in A,\;n\le N}\frac{1}{n^2}\\
             &\le\frac{1}{\log N}\sum_{k\in A_m,\;k\le m(N+1)}
\frac{1}{k}+\epsilon/2.
\end{align*}
Therefore, for all $N\geq N_\epsilon$, we have
\[
1\ge \frac{1}{\log {mN}}\sum_{k\in A_m,\;k\le mN}\,\frac{1}{k}=
\frac{\log N}{\log {mN}}\frac{1}{\log {N}}\sum_{k\in A_m,\;k\le mN}
\frac{1}{k}\ge (1-\epsilon)^2.
\]
Letting $\epsilon\to 0$, we obtain
$\lim_{N\to\infty}\, \frac{1}{\log {mN}}\sum_{k\in A_m,\;k\le mN}\,\frac{1}{k}=1$,
and then $\delta(A_m)=1$.
\end{proof}

By Lemmas \ref{l:n2} and
 \ref{Sets} we obtain the following:

\begin{Lemma}\label{l:n2na}
Let $A\subset \N$ with $\delta(A)=1$.
Then there exists $\tilde{A}\subset \N$ with
$\delta(\tilde{A})=1$, such that for any $m\in \N$ there exists $N_m$ satisfying
\beq\label{rrr}
\tilde{A}\cap \{n:\,n\ge N_m\}\subset \bigcap_{k=1}^m A_k
\eeq
(sets $A_k$ are defined by~\eqref{Setsss}).
\end{Lemma}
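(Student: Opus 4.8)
The plan is to deduce Lemma~\ref{l:n2na} immediately from Lemma~\ref{l:n2} and the diagonalization-type Lemma~\ref{Sets}, specialized to the trivial subsequence $(N_s)=\N$.

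First I would record how Lemma~\ref{Sets} reads when one takes $N_s:=s$. In that case the hypothesis $\lim_{s\to\infty}\frac1{\log N_s}\sum_{n\in M_k,\,n\le N_s}\frac1n=1$ is exactly the statement $\delta(M_k)=1$ (the numerator is constant on $[s,s+1)$ and $\log s/\log\lfloor s\rfloor\to1$, so the limit along integers agrees with the limit defining $\delta$), and the two conclusions become: $\delta(M)=1$, together with the existence, for each $k$, of an index $s_k\in\N$ with $M\cap\{n:\,n\ge s_k\}\subset M_k$. Thus Lemma~\ref{Sets} says precisely that any countable family of subsets of $\N$ of full logarithmic density admits a set $M$ of full logarithmic density that is eventually contained in each member of the family.

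Next, I would apply Lemma~\ref{l:n2} to obtain $\delta(A_k)=1$ for every $k\in\N$, where $A_k$ is defined by~\eqref{Setsss}, and feed the family $(A_k)_{k\ge1}$ into Lemma~\ref{Sets} with $N_s=s$. This produces a set $\tilde A\subset\N$ with $\delta(\tilde A)=1$ and, for each $k$, a threshold $s_k$ such that $\tilde A\cap\{n:\,n\ge s_k\}\subset A_k$. To finish, for each $m\in\N$ I would set $N_m:=\max\{s_1,\dots,s_m\}$; then any $n\in\tilde A$ with $n\ge N_m$ lies in $A_k$ for all $k\le m$, i.e.\ in $\bigcap_{k=1}^m A_k$, which is~\eqref{rrr}. (Equivalently, one could first replace each $A_k$ by $\bigcap_{j\le k}A_j$, still of full logarithmic density because its complement is a finite union of sets of zero upper logarithmic density, and apply Lemma~\ref{Sets} to that nested family directly.)

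There is essentially no obstacle: all the work is already contained in Lemmas~\ref{l:n2} and~\ref{Sets}. The only points requiring a moment's attention are the harmless identification of the hypothesis of Lemma~\ref{Sets} with ``$\delta=1$'' upon taking $N_s=s$, and the passage from ``eventually contained in each $A_k$ separately'' to ``eventually contained in $\bigcap_{k\le m}A_k$'', which is handled by taking the maximum of finitely many thresholds.
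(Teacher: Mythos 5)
Your proposal is correct and is exactly the deduction the paper intends: the paper states Lemma~\ref{l:n2na} with no separate proof, just the remark that it follows from Lemmas~\ref{l:n2} and~\ref{Sets}, which is precisely your argument (apply Lemma~\ref{l:n2} to get $\delta(A_k)=1$, feed the family into Lemma~\ref{Sets} with $N_s=s$, and take the maximum of the finitely many thresholds to pass to $\bigcap_{k\le m}A_k$). Nothing further is needed.
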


Given $u:\N\to\C$, let
$U(x):=\sum_{n\le x} u(n)$, for $x\ge 0$,
denote the corresponding summation function.

%

\begin{Lemma}\label{l:mertens}
Let $A\subset \N$ with $\delta(A)=1$, and let  $u:\N\to \C$ such that
\[
\frac{|U(n)|}n\tend{A\ni n}{\infty} 0.
\]
Then there exists $\widetilde{A}\subset A$, $\delta(\widetilde{A})=1$ such that, for each
$a\ge 1$ and
$\vep>0$, we can find $X=X(a,\epsilon)>1$ for which
\[
\forall x\ge X,\ [x]\in \tilde{A} \Longrightarrow \sum_{n\leq a}|U(x/n)|\leq \vep x.
\]
\end{Lemma}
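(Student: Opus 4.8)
The plan is to deduce the statement directly from the hypothesis $|U(n)|/n\to 0$ along $A$, using the elementary identity $[x/n]=[[x]/n]$, valid for every real $x\ge0$ and every integer $n\ge1$. Since the summation function $U(\,\cdot\,)$ depends only on the integer part of its argument, this identity gives $U(x/n)=U([k/n])$ where $k:=[x]$. Thus $|U(x/n)|$ will be small as soon as $[k/n]$ is a \emph{large} element of $A$, i.e.\ as soon as $k\in A_n$ (with $A_n$ as in~\eqref{Setsss}) and $k/n$ is large; and we need this simultaneously for all $n\le a$, i.e.\ $k\in\bigcap_{n\le a}A_n$.

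First I would fix $\widetilde A$ once and for all, \emph{before} introducing $a$ and $\varepsilon$. Apply Lemma~\ref{l:n2na} to $A$: it yields $\widetilde A\subset\N$ with $\delta(\widetilde A)=1$ and, for each $m$, an integer $N_m$ with $\widetilde A\cap\{n\ge N_m\}\subset\bigcap_{k=1}^m A_k$. Because $A_1=A$, this already forces $\widetilde A\setminus A\subset\{1,\dots,N_1-1\}$; replacing $\widetilde A$ by $\widetilde A\cap A$ only removes finitely many elements, hence preserves logarithmic density~$1$, and now $\widetilde A\subset A$ as required.

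Then, given $a\ge1$ and $\varepsilon>0$, set $\delta:=\varepsilon/(1+\log a)$. By the hypothesis there is $M_0=M_0(a,\varepsilon)\ge1$ with $|U(m)|\le\delta m$ for all $m\in A$ with $m\ge M_0$. Choose $X$ so large that $x\ge X$ implies $k:=[x]\ge\max\{a(M_0+1),\,N_{\lfloor a\rfloor}\}$. Now take any $x\ge X$ with $[x]\in\widetilde A$. Since $k\ge N_{\lfloor a\rfloor}$ and $k\in\widetilde A$, Lemma~\ref{l:n2na} gives $k\in\bigcap_{n\le a}A_n$, i.e.\ $[k/n]\in A$ for every $n\le a$; and $k\ge a(M_0+1)$ gives $[k/n]\ge k/n-1\ge M_0$ for every $n\le a$. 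Hence $|U(x/n)|=|U([k/n])|\le\delta[k/n]\le\delta x/n$, and summing over $n\le a$ and using $\sum_{n\le a}1/n\le1+\log a$ yields $\sum_{n\le a}|U(x/n)|\le\delta x(1+\log a)=\varepsilon x$.

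The only points requiring a little care — none of them a real obstacle — are: (i) the floor identity $[x/n]=[[x]/n]$, which legitimises replacing the real argument $x/n$ by the integer $[k/n]$; (ii) pushing away the finitely many ``small-argument'' terms where $[k/n]<M_0$ and the decay hypothesis is unavailable (alternatively one could bound these crudely by $a\sum_{m\le M_0}|u(m)|=o(x)$) — this is exactly why $X$ is taken past $a(M_0+1)$; and (iii) producing a single $\widetilde A$ valid for all pairs $(a,\varepsilon)$ at once, which is precisely the content of Lemma~\ref{l:n2na}. No second-moment or dynamical input enters: the argument is elementary once Lemma~\ref{l:n2na} is available.
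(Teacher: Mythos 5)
Your proof is correct and follows essentially the same route as the paper's: fix $\widetilde A$ via Lemma~\ref{l:n2na}, use the decay hypothesis past a threshold together with $[x/n]=[[x]/n]$, and sum the bounds $|U(x/n)|\le \vep x/(n\,C)$ (your $\delta x/n$ with $\delta=\vep/(1+\log a)$) over $n\le a$. Your explicit step of intersecting $\widetilde A$ with $A$ to guarantee $\widetilde A\subset A$ is a small tidy-up the paper leaves implicit, but it changes nothing of substance.
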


\begin{proof} We set $\widetilde{A}$ as in Lemma~\ref{l:n2na}, so $\delta(\widetilde{A})=1$.
Let $a\ge 1$ be fixed.
Denote $C:=\sum_{n\leq a}\frac{1}{n}$ and choose $K\geq1$ so that
\beq\label{mert1}
A\ni [x]\ge K\;\Rightarrow\;\frac{|U(x)|}{x}
\le \frac{|U([x])|}{[x]}\le \vep/C.
\eeq
Taking $m=[a]$ and using Lemma~\ref{l:n2na}, we choose
 $N_m$ such that \eqref{rrr} holds, i.e.\
$[n], [n/2],\ldots,[n/m]\in A$ whenever $n\in \tilde{A}$ and $n\ge N_m$.
Then for $x\ge (m+1)\max\{N_m,K\}$ with $[x]\in \tilde{A}$, by~\eqref{mert1}, we have
$
\sum_{n\leq a}|U(x/n)|
\leq
\sum_{n\leq a}\frac xn\frac{\vep} C=x\vep.
$
\end{proof}

\begin{Prop}\label{p:toPNT}
The statement {\bf logarithmic} Chowla conjecture (for $\mob$) holds\footnote{Proved by Tao in \cite{Ta1}.} for auto-correlations of length~2 implies that there exists a sequence $A\subset\N$, $\delta(A)=1$, such that
$
\sum_{n\leq x}\Lambda(n)=x+{\rm o}(x)\text{ for }A\ni x\to\infty$.\end{Prop}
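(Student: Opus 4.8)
The plan is to follow the pattern of Theorem~\ref{Erg1}: first run Tao's second-moment argument to extract from the order~$2$ logarithmic Chowla conjecture a set of logarithmic density~$1$ along which $M(x):=\sum_{n\le x}\mob(n)=\mathrm{o}(x)$, and then convert this into $\psi(x):=\sum_{n\le x}\Lambda(n)=x+\mathrm{o}(x)$ via an elementary Dirichlet convolution identity, using Lemma~\ref{l:mertens} to control the rescaled sums of $\mob$ that appear. For the first step I would apply Theorem~\ref{banach} with $\mathscr{B}_1=\mathscr{B}_2=\C$, $(N_s)=\N$, $S_k(z):=\mob(k)z$ (so that \eqref{bounded} holds with constant $1$) and $\phi(s)=s^2$; for $f=1$ the quantity in \eqref{hh} is
\[
R_1(H)=\frac1{H^2}\sum_{h_1,h_2\le H}\ \lim_{N\to\infty}\,\frac1{\log N}\sum_{n\le N}\frac{\mob(n+h_1)\mob(n+h_2)}{n},
\]
the inner limits existing because for $h_1=h_2$ the value is the logarithmic density $6/\pi^2$ of the squarefree integers, while for $h_1\ne h_2$ it is $0$ by the order~$2$ logarithmic Chowla conjecture (after the reindexing $m=n+\min\{h_1,h_2\}$ and $\tfrac1n=\tfrac1m+O(m^{-2})$). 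Hence $R_1(H)=6/(\pi^2H)\to0$, so $R_f(H)=|f|^2R_1(H)\to0$ for every $f$, and Theorem~\ref{banach} yields $A\subset\N$ with $\delta(A)=1$ and $M(x)=\mathrm{o}(x)$ as $x\to\infty$ through $A$.

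For the second step the right identity must be chosen with some care. The naive one, $\Lambda-\mathbf{1}=\mob*(\log-\tau)$ (with $\mathbf{1}$ the constant function $1$ and $\tau=\mathbf{1}*\mathbf{1}$ the divisor function), is not strong enough: by Stirling and the Dirichlet divisor estimate the summatory function of $\log-\tau$ equals $-2\gamma y+O(\sqrt y)$ and so still carries a linear term ($\gamma$ Euler's constant). Instead I would take $g:=\log-\tau+2\gamma\,\mathbf{1}$; convolving by $\mathbf{1}$ and using $\mathbf{1}*\Lambda=\log$ gives $\mob*g=\Lambda-\mathbf{1}+2\gamma\,\delta_1$, where $\delta_1$ is the Dirichlet unit, so that
\[
\sum_{ab\le x}\mob(a)g(b)=\psi(x)-[x]+2\gamma=\psi(x)-x+O(1),\qquad G(y):=\sum_{n\le y}g(n)=O(\sqrt y).
\]
Then I would apply Lemma~\ref{l:mertens} with $u=\mob$, $U=M$ and the set $A$ above, producing $\widetilde{A}\subset A$ with $\delta(\widetilde{A})=1$ such that for every $a\ge1$ and $\vep>0$ there is $X(a,\vep)$ with $\sum_{b\le a}|M(x/b)|\le\vep x$ whenever $x\ge X(a,\vep)$ and $[x]\in\widetilde{A}$.

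To conclude, split $\sum_{ab\le x}\mob(a)g(b)$ by Dirichlet's hyperbola method at $z=x/a_0$, for a large integer $a_0$:
\[
\sum_{ab\le x}\mob(a)g(b)=\sum_{a\le x/a_0}\mob(a)G(x/a)+\sum_{b\le a_0}g(b)M(x/b)-M(x/a_0)\,G(a_0).
\]
Since $|G(y)|\ll\sqrt y$, the first sum is $\ll\sqrt x\sum_{a\le x/a_0}a^{-1/2}\ll x/\sqrt{a_0}$, the third term is $\ll\sqrt{a_0}\cdot(x/a_0)=x/\sqrt{a_0}$, and the middle sum is $\ll_{a_0}\sum_{b\le a_0}|M(x/b)|\le\vep x$ once $[x]\in\widetilde{A}$ and $x\ge X(a_0,\vep)$. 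Thus $|\psi(x)-x|\ll x/\sqrt{a_0}+C(a_0)\vep x+O(1)$ for large $x\in\widetilde{A}$; letting first $a_0\to\infty$ and then $\vep\to0$ gives $\psi(x)=x+\mathrm{o}(x)$ along $\widetilde{A}$, which has logarithmic density~$1$.

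The crux is the choice of $g$. A direct convolution expresses $\psi(x)-x$ as $\sum_{ab\le x}\mob(a)(\log-\tau)(b)$, but because the summatory function of $\log-\tau$ is $-2\gamma y+O(\sqrt y)$ the ``long'' piece of the hyperbola split would only be $O(x)$; absorbing that linear term into the correction $2\gamma\,\delta_1$ makes $G(y)=O(\sqrt y)$, after which the split separates a genuinely $\mathrm{o}(x)$ long piece from a short piece of fixed length $a_0$ --- precisely the quantity $\sum_{b\le a_0}|M(x/b)|$ that Lemma~\ref{l:mertens} is built to control along a set of logarithmic density~$1$. (Obtaining $M(x)=\mathrm{o}(x)$ in logarithmic density from order~$2$ logarithmic Chowla is, by contrast, immediate from Theorem~\ref{banach}.)
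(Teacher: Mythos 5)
Your proposal is correct and follows essentially the same route as the paper: Theorem~\ref{banach} with $\phi(s)=s^2$ (the second-moment computation in your $R_1(H)$ is exactly the paper's footnote) gives $M(N)=o(N)$ along a set of logarithmic density one, and the passage to $\psi(x)=x+o(x)$ is the paper's Apostol-style argument, where your explicit choice $g=\log-\tau+2\gamma\,\mathbf{1}$ with $G(y)=O(\sqrt y)$ is precisely the arithmetic function the paper invokes from \cite{Ap}, combined with the same hyperbola split and Lemma~\ref{l:mertens}. The only (harmless) slip is the order of limits at the end: since $C(a_0)$ depends on $a_0$, one should let $\vep\to0$ for fixed $a_0$ and then $a_0\to\infty$ (or just note that the limsup is bounded by $c/\sqrt{a_0}+C(a_0)\vep$ for all pairs, whose infimum is $0$).
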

\begin{proof}
By Theorem~\ref{banach}~\footnote{Note that if $(c_n)$ is a bounded sequence of complex numbers with $\lim_{N\to\infty}\frac1{\log N}\sum_{n\leq N}\frac1nc_n\ov{c_{n+h}}=0$ for each $h\neq0$ then for
$$R_H((c_n)):=\limsup_{N\to\infty}\frac1{\log N}\sum_{n\leq N}\frac1n\left|\frac1H\sum_{h\leq H}c_{n+h}\right|^2$$
we have $\lim_{H\to\infty}R_H((c_n))=0$.} (or directly by Tao's proof in \cite{Ta}), we obtain that
$
\frac1N\sum_{n\leq N}\mob(n)\to 0$ when $A\ni N\to\infty$,
where $\delta(A)=1$.

Following \cite{Ap}, we repeat the proof that $M(\mob):=\lim_{N\to\infty}\sum_{n\leq N}\mob(n)=0$ imply PNT. We have
$$
\sum_{n\leq x}\Lambda(n)=x-\sum_{q,d; qd\leq x}\mob(d)f(q)+{\rm O}(1)$$
for some arithmetic function $f$. All we need to show is that
$
\sum_{q,d; qd\leq x}\mob(d)f(q)={\rm o}(x)$.
Using summation by parts, one arrives at
\beq\label{summ3}
\sum_{q,d; qd\leq x}\mob(d)f(q)=\sum_{n\leq b}\mob(n)F(x/n)+\sum_{n\leq a}f(n)M(x/n)-F(a)M(b),\eeq
where $ab=x$. Moreover, $F(x)=B\cdot \sqrt x$ for a constant $B>0$. It follows that the first summand in~\eqref{summ3} is bounded by $B_1x/\sqrt a$ for a constant $B_1>B$.
Indeed,
$$
\left|\sum_{n\leq b}\mob(n)F(x/n)\right|\le
B\sqrt{x}\sum_{n\leq b}\frac{1}{\sqrt{n}}\le B_1\sqrt{b}\sqrt{x}=B_1\frac{x}{\sqrt{a}}.
$$
We fix $\vep>0$ and choose $a\geq1$, so that
$
B_1/\sqrt a<\vep$
which yields the first summand $<\vep x$ for all $x\geq1$.
To majorate the second summand, we use
$$
\left|\sum_{n\leq a}f(n)M(x/n)\right|\le
F_a\sum_{n\leq a}|M(x/n)|,\quad F_a:=\max\{|f(n)|:\,n\le a\},
$$
and Lemma~\ref{l:mertens} (with $u=\mob$). Finally, the third summand is majorated in the same way as in \cite{Ap}.\end{proof}

\footnotesize

\normalsize
Faculty of Mathematics and Computer Science\\
Nicolaus Copernicus University, Toru\'n, Poland\\
gomilko@mat.umk.pl, mlem@mat.umk.pl\\
Laboratoire de Math\'ematiques Rapha\"el Salem, Normandie Universit\'e\\ CNRS -- Avenue de l'Universit\'e -- 76801
Saint Etienne du Rouvray, France\\
Thierry.de-la-Rue@univ-rouen.fr


\begin{thebibliography}{99}
\bibitem{Ab-Le-Ru} E.~H.\ El Abdalaoui,  M.\ Lema\'nczyk, T.\ de la Rue, {\em Automorphisms with quasi-discrete spectrum, multiplicative functions and average orthogonality along short intervals}, International Math.\ Res.\ Notices {\bf 14} (2017), 4350-4368.
\bibitem{Ab-Ku-Le-Ru1}
E.~H. El~Abdalaoui, J.~Ku\l{}aga-Przymus,
  M.~Lema{\'n}czyk, and T.~de~la Rue, \emph{The {C}howla and the {S}arnak
  conjectures from ergodic theory point of view}, Discrete Contin.\ Dyn.\ Syst.\
  \textbf{37} (2017),  2899--2944.

\bibitem{Ab-Ku-Le-Ru} E.\ H.\ El Abdalaoui, J.\ Ku\l aga-Przymus,
M. Lema\'nczyk, T. de la Rue,  \emph{M\"obius disjointness
for models of an ergodic system and beyond},
Israel J.\ Math.\ {\bf 228} (2018), 707-751.

\bibitem{Ap}H.\ Apostol, {\em Introduction to Analytic Number Theory},  Undergraduate Texts in Mathematics, Springer, 2010.

\bibitem{Bo-Sa-Zi}
J.~Bourgain, P.~Sarnak, and T.~Ziegler, \emph{Disjointness of {M}\"obius from
  horocycle flows}, From {F}ourier analysis and number theory to {R}adon
  transforms and geometry, Dev. Math., vol.~28, Springer, New York, 2013,
  67--83.
\bibitem{Ce-Si} F.\ Cellarosi, Ya.\ G.\ Sinai, {\em Ergodic properties of square-free numbers}, J.\ European Math.\ Soc.\  {\bf 15} (2013), 1343-1374.

\bibitem{Da-Er} H.\ Davenport, P.\ Erd\"os, {\em On sequences of positive integers}, Acta Arithmetica
{\bf 2} (1936), 147--151.

\bibitem{Dy-Ka-Ku-Le} A. Dymek, S.\ Kasjan, J.\ Ku\l aga-Przymus, M. Lema\'nczyk, {\em $B$-free sets and dynamics},  Trans. Amer. Math. Soc.\ {\bf 370} (2018), 5425-5489.

\bibitem{Fe-Ku-Le}S. Ferenczi, J. Ku\l aga-Przymus, M.\ Lema\'nczyk, {\em  Sarnak's conjecture, what's new}, in:
Ergodic Theory and Dynamical Systems in their Interactions with Arithmetics and Combinatorics,  CIRM Jean-Morlet Chair, Fall 2016,
Editors: S. Ferenczi, J. Ku\l aga-Przymus, M. Lema\'nczyk,
    Lecture Notes in Mathematics 2213,  Springer International Publishing.
\bibitem{Fr-Ho}
N.~Frantzikinakis,  B.~Host, \emph{The logarithmic {S}arnak conjecture for
  ergodic weights}, Annals Math.\ {\bf 187} (2018), 869-931.
\bibitem{Fr-Ho1} N.\ Frantzikinakis, B.\ Host, {\em Furstenberg systems of bounded multiplicative functions and applications},
    to appear in International Math.\ Res.\ Notices, arXiv:1804.08556.
\bibitem{Go-Kw-Le} A.\ Gomilko, D.\ Kwietniak, M.\ Lema\'nczyk,  {\em Sarnak's conjecture implies the Chowla conjecture along a subsequence}, in:
Ergodic Theory and Dynamical Systems in their Interactions with Arithmetics and Combinatorics,  CIRM Jean-Morlet Chair, Fall 2016,
Editors: S. Ferenczi, J. Ku\l aga-Przymus, M. Lema\'nczyk,
    Lecture Notes in Mathematics 2213,  Springer International Publishing, pp. 418, arXiv:1710.07049.

\bibitem{Ka-Le-Ul}A.\ Kanigowski, M.\ Lema\'nczyk, C.\ Ulcigrai,
{\em On disjointness of some parabolic flows}, arXiv:1810.11576.


\bibitem{Sa}
P.~Sarnak, {\em Three lectures on the {M}{\"o}bius function, randomness and
  dynamics}, \url{http://publications.ias.edu/sarnak/}.


\bibitem{Ta00} T.\ Tao,
https://terrytao.wordpress.com/2012/10/14/the-chowla-conjecture-and-the-sarnak-conjecture/


\bibitem{Ta1}
T.~Tao, \emph{The logarithmically averaged {C}howla and {E}lliot conjectures
  for two-point correlations}, Forum Math. Pi \textbf{4} (2016).
\bibitem{Ta2}T.\ Tao, \emph{Equivalence of the logarithmically averaged {C}howla and {S}arnak conjectures}, Number Theory -- {D}iophantine Problems, Uniform
  Distribution and Applications: Festschrift in Honour of Robert F. Tichy's
  60th Birthday (C.~Elsholtz and P.~Grabner, eds.), Springer International
  Publishing, Cham, 2017, 391--421.
\bibitem{Ta} T. Tao, \url{https://terrytao.wordpress.com/2017/10/20/the-logarithmically-averaged-and-non-logarithmically-averaged-chowla-conjectures/}
\bibitem{Ta-Te} T. Tao, J. Teravainen, {\em The structure of correlations of multiplicative functions at almost all scales, with applications to the Chowla and Elliot conjectures},
 arXiv 1809.02518.
\bibitem{Ta-Te1} T. Tao, J.\ Teravainen, {\em Odd order cases of the logarithmically averaged Chowla conjecture}, arXiv:1710.02112.
\end{thebibliography}
\end{document}